\providecommand{\U}[1]{\protect\rule{.1in}{.1in}}
\newtheorem{theorem}{Theorem}
\newtheorem{corollary}[theorem]{Corollary}
\newtheorem{proposition}[theorem]{Proposition}
\newtheorem{remark}[theorem]{Remark}
\newenvironment{proof}[1][Proof]{\noindent\textbf{#1.} }{\ \rule{0.5em}{0.5em}}
\begin{document}

\title{On reciprocity formula of character Dedekind sums and the integral of
products of Bernoulli polynomials}
\author{M. Cihat Da\u{g}l\i\ and M\"{u}m\"{u}n Can\thanks{Corresponding author}\\Department of Mathematics, Akdeniz University, 07058-Antalya, Turkey\\ E--mail: mcihatdagli@akdeniz.edu.tr, mcan@akdeniz.edu.tr}
\date{}
\maketitle
\begin{abstract}
We give a simple proof for the reciprocity formulas of character Dedekind sums associated with two primitive characters, whose modulus need not to be same, by utilizing the character analogue of the Euler--MacLaurin summation formula. Moreover, we extend known results on the integral of products of Bernoulli polynomials by considering the integral
\[
\int\limits_{0}^{x}B_{n_{1}}\left(  b_{1}z+y_{1}\right)  \cdots B_{n_{r}}\left(  b_{r}z+y_{r}\right)  dz,
\]
where $b_{l}$ $(b_{l}\neq 0)$ and $y_{l}$ $(1\leq l\leq r)$ are real numbers. As a consequence of this integral we establish a connection between the reciprocity relations of sums of products of Bernoulli polynomials and of the Dedekind sums.

\textbf{Keywords:} Dedekind sums, Bernoulli polynomials, Bernoulli numbers,
Euler--MacLaurin formula, Laplace transform.

\textbf{MSC 2010 :} 11F20, 11B68, 65B15, 44A10.

\end{abstract}

\section{Introduction}

Let%
\[
\left(  \left(  x\right)  \right)  =%
\begin{cases}
x-\left[  x\right]  -1/2, & \text{if }x\in\mathbb{R}\backslash\mathbb{Z}%
\text{,}\\
0, & \text{if }x\in\mathbb{Z}\text{,}%
\end{cases}
\]
with $[x]$\ being the largest integer $\leq x$. For positive integers $c$ and
integers $b$ the classical Dedekind sum $s(b,c)$, arising in the theory of
Dedekind $\eta$--function, was introduced by R. Dedekind in 1892 as
\[
s(b,c)=\sum\limits_{j(\text{mod }c)}\left(  \left(  \frac{j}{c}\right)
\right)  \left(  \left(  \frac{bj}{c}\right)  \right)  .
\]
The most important property of Dedekind sums is the reciprocity
theorem\textbf{\ }%
\begin{equation}
s(b,c)+s(c,b)=-\frac{1}{4}+\frac{1}{12}\left(  \frac{b}{c}+\frac{c}{b}%
+\frac{1}{bc}\right)  \label{dr}%
\end{equation}
for $\gcd\left(  b,c\right)  =1$. For several proofs of (\ref{dr}) see
\cite{rg} (for recent studies on Dedekind sums the reader may consult to
\cite{kg,h,ks,lz,zh}). These sums were later generalized by various
mathematicians and the corresponding reciprocity laws were obtained. Apostol
\cite{ap} generalized $s(b,c)$ by defining
\[
s_{p}(b,c)=\sum\limits_{j=0}^{c-1}\overline{B}_{p}\left(  \frac{bj}{c}\right)
\overline{B}_{1}\left(  \frac{j}{c}\right)  ,
\]
where $\overline{B}_{p}\left(  x\right)  $ is the $p$th Bernoulli function
defined by
\[
\overline{B}_{n}\left(  x\right)  =B_{n}\left(  x-\left[  x\right]  \right)
\text{ for\ }n>1\text{\ and }\overline{B}_{1}(x)=\left(  \left(  x\right)
\right)  .
\]
Here $B_{n}\left(  x\right)  $ denotes the $n$th Bernoulli polynomial (see
Section 2). The sum $s_{p}(b,c)$ satisfies the following reciprocity formula
for odd $p$ and $\gcd\left(  b,c\right)  =1$ \cite{ap}:
\begin{equation}
(p+1)\left(  bc^{p}s_{p}\left(  b,c\right)  +\frac{{}}{{}}cb^{p}s_{p}\left(
c,b\right)  \right)  =\sum\limits_{j=0}^{p+1}\binom{p+1}{j}\left(  -1\right)
^{j}b^{j}c^{p+1-j}B_{p+1-j}B_{j}+pB_{p+1}. \label{dr1}%
\end{equation}

The another generalization is due to Berndt \cite{b2}. He gave a character
transformation formula similar to those for the Dedekind $\eta$--function and
defined Dedekind sums with character $s\left(  b,c:\chi\right)  $ by
\[
s\left(  b,c:\chi\right)  =\sum\limits_{n=0}^{ck-1}\chi\left(  n\right)
\overline{B}_{1,\chi}\left(  \frac{bn}{c}\right)  \overline{B}_{1}\left(
\frac{n}{ck}\right)  ,
\]
where $\chi$ denotes a non-principal primitive character modulo $k$ and
$\overline{B}_{p,\chi}\left(  x\right)  $ is the $p$th generalized Bernoulli
function, with $\overline{B}_{p,\chi}\left(  0\right)  =B_{p,\chi}$ (see
(\ref{4})). Using character transformation formula, he also derived the
following reciprocity law \cite[Theorem 4]{b2}%
\begin{equation}
s\left(  c,b:\chi\right)  +s\left(  b,c:\overline{\chi}\right)  =B_{1,\chi
}B_{1,\overline{\chi}}, \label{dkr}%
\end{equation}
whenever $b$ and $c$ are coprime positive integers, and either $c$ or
$b\equiv0$(mod $k$). For the proofs of (\ref{dr}) and (\ref{dkr}) via
(periodic) Poisson summation formula see \cite{b6}. The sum $s\left(
b,c:\chi\right)  $ is generalized by Cenkci et al \cite{cck} as
\begin{equation}
s_{p}\left(  b,c:\chi\right)  =\sum\limits_{n=0}^{ck-1}\chi\left(  n\right)
\overline{B}_{p,\chi}\left(  \frac{bn}{c}\right)  \overline{B}_{1}\left(
\frac{n}{ck}\right)  \label{5}%
\end{equation}
and the following reciprocity formula is established:

Let $p$\ be odd and $b,$ $c$\ be coprime positive integers. Let $\chi$\ be a
non-principal primitive character of modulus $k,$\ where $k$\ is a prime
number if $\gcd\left(  k,bc\right)  =1$, otherwise $k$\ is an arbitrary
integer. Then%
\begin{align}
&  (p+1)\left(  bc^{p}s_{p}(b,c:\chi)+cb^{p}s_{p}(c,b:\overline{\chi})\right)
\nonumber\\
&  \quad=\sum\limits_{j=0}^{p+1}\binom{p+1}{j}b^{j}c^{p+1-j}B_{j,\overline
{\chi}}B_{p+1-j,\chi}+\frac{p}{k}\chi\left(  c\right)  \overline{\chi}\left(
-b\right)  \left(  k^{p+1}-1\right)  B_{p+1}. \label{rp}%
\end{align}

Recently, the authors \cite{cd} have used the character analogue of the
Euler--MacLaurin summation formula in order to prove (\ref{rp}) (this method
was also exploited in \cite{ck,dc}). We perceive from this formula that the
sum $s_{p}\left(  b,c:\chi\right)  $\ can be generalized to sums such as
$s_{p}\left(  b,c:\chi_{1},\chi_{2}\right)  $ involving two primitive characters.

In this paper, we\textbf{ }systematically generalize $s_{p}\left(
b,c:\chi\right)  $ to sums involving two primitive characters and prove the
corresponding reciprocity formulas.

Firstly, we define character Dedekind sum involving primitive characters
$\chi_{1}$ and $\chi_{2}$ of modulus $k$ by%
\[
s_{p}\left(  b,c:\chi_{1},\chi_{2}\right)  =\sum\limits_{n=0}^{ck-1}\chi
_{1}\left(  n\right)  \overline{B}_{p,\chi_{2}}\left(  \frac{bn}{c}\right)
\overline{B}_{1}\left(  \frac{n}{ck}\right)  ,
\]
which is a natural generalization of the sum $s_{p}\left(  b,c:\chi\right)  $
given by (\ref{5}), i.e., $s_{p}\left(  b,c:\chi,\chi\right)  =s_{p}\left(
b,c:\chi\right)  $. Once again utilizing the power of the Euler--MacLaurin
summation formula we derive the following reciprocity formula.

\begin{theorem}
\label{rp1}Let $b,$ $c$ be positive integers with $q=\gcd\left(  b,c\right)  $
and $p>1$. Let $\chi_{1}$ and $\chi_{2}$ be non-principal primitive characters
of modulus $k$.\ For $\left(  -1\right)  ^{p+1}\chi_{1}(-1)\chi_{2}(-1)=1$ the
following reciprocity formula holds
\begin{align*}
&  \left(  p+1\right)  \left(  bc^{p}s_{p}\left(  b,c:\chi_{1},\chi
_{2}\right)  +cb^{p}s_{p}\left(  b,c:\overline{\chi_{2}},\overline{\chi_{1}%
}\right)  \right) \\
&  \ =\sum\limits_{j=0}^{p+1}\binom{p+1}{j}b^{j}c^{p+1-j}B_{j,\overline
{\chi_{1}}}B_{p+1-j,\chi_{2}}+pq^{p+1}k^{p-1}\sum\limits_{h=1}^{k-1}%
\sum\limits_{a=1}^{k-1}\chi_{1}(h)\overline{\chi}_{2}(a)\overline{B}%
_{p+1}\left(  \frac{ca+bh}{qk}\right)  .
\end{align*}

\end{theorem}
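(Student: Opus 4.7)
Following the strategy used by the authors in \cite{cd} for the one-character case, I would apply the character analogue of the Euler--MacLaurin summation formula with character $\chi_1$ modulo $k$ to the function $f(x)=\overline{B}_{p,\chi_2}(bx/c)$ on $[0,ck]$. Writing the weight $\overline{B}_{1}(n/(ck))=n/(ck)-1/2$ for $1\le n\le ck-1$ splits $s_{p}(b,c:\chi_{1},\chi_{2})$ into a linear-in-$n$ piece absorbed by the Euler--MacLaurin integral and a pure character sum controlled by the non--principality of $\chi_{1}$. Iterated integration by parts (done $p$ times) trades each derivative of $f$, which brings out a factor $b/c$ and lowers the $\chi_{2}$-index, for an antiderivative of $\overline{B}_{1,\chi_{1}}$, which raises the $\chi_{1}$-index, leaving a single integral $J$ of a product of two generalized Bernoulli functions of total degree $p+1$, plus boundary data at $0$ and $ck$ built from the values $B_{j,\overline{\chi_{1}}}$ and $B_{p+1-j,\chi_{2}}$.

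I would then run the parallel computation with $(b,c,\chi_{1},\chi_{2})\mapsto(c,b,\overline{\chi_{2}},\overline{\chi_{1}})$, producing the companion sum $cb^{p}s_{p}(c,b:\overline{\chi_{2}},\overline{\chi_{1}})$. The parity hypothesis $(-1)^{p+1}\chi_{1}(-1)\chi_{2}(-1)=1$ is precisely what makes the two identities share a common integral $J$ with matching sign, so that on adding them the integrals combine and the left--hand side of the theorem emerges. The boundary contributions collect, via the Vandermonde--type convolution for generalized Bernoulli numbers, into the first term $\sum_{j=0}^{p+1}\binom{p+1}{j}b^{j}c^{p+1-j}B_{j,\overline{\chi_{1}}}B_{p+1-j,\chi_{2}}$ on the right.

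The main obstacle is evaluating the leftover integral $J$ explicitly when $q=\gcd(b,c)>1$. For this I would expand each generalized Bernoulli function by the classical identity
\[
\overline{B}_{n,\chi}(x)=k^{n-1}\sum_{a=1}^{k}\chi(a)\,\overline{B}_{n}\!\left(\frac{x+a}{k}\right),
\]
turning $J$ into a double character sum of integrals of products of ordinary periodic Bernoulli functions with linear arguments $(x+h)/k$ and $(bx+ca)/(ck)$. Rescaling by $u=x/q$ realigns both arguments into a common linear form $\alpha u+\beta$ with integer $\alpha$ and rational $\beta=(ca+bh)/(qk)$, at which point the integral formula for products of Bernoulli polynomials developed in the remainder of the paper collapses each integrand to $\overline{B}_{p+1}(\beta)/(p+1)$. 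Tracking the Jacobian factor $q$ together with the two $k^{n-1}$ prefactors produces the overall constant $q^{p+1}k^{p-1}$, yielding the asserted double sum over $h$ and $a$. The appearance of $\gcd(b,c)=q$ in this final step is exactly what distinguishes the present result from the coprime case treated in \cite{cck}, and it is precisely why the right--hand side cannot be collapsed to a single Bernoulli value but must retain the full $(h,a)$ double sum.
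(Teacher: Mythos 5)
Your skeleton matches the paper's: split off the factor $n$ from $\overline{B}_{1}(n/(ck))$, apply Berndt's character Euler--MacLaurin formula, run the reciprocal computation, use the parity hypothesis to dispose of the remaining integral, and finish with a Raabe-type evaluation. But two of the steps as you describe them would fail.

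First, because the summand carries the factor $n$, the function fed to Euler--MacLaurin is $x\overline{B}_{p,\chi_{2}}(bx/c)$, and by the Leibniz rule its expansion leaves \emph{two} remainder integrals: one of the form $\int_{0}^{k}x\,\overline{B}_{l+1,\overline{\chi_{1}}}(cx)\overline{B}_{p-l-1,\chi_{2}}(bx)\,dx$ and one without the factor $x$. The $x$-weighted integral cannot be collapsed to Bernoulli values (it is itself a Dedekind-sum-like object); the only way to eliminate it is to make it cancel against its counterpart from the $(c,b,\overline{\chi_{2}},\overline{\chi_{1}})$ computation. For that the two expansions must be truncated at \emph{different} depths --- the paper takes $l+1=p-1$ in one and $l=0$ in the other, so that both leftover integrals equal $\int_{0}^{k}x\overline{B}_{p-1,\overline{\chi_{1}}}(cx)\overline{B}_{1,\chi_{2}}(bx)\,dx$ and the parity hypothesis makes their coefficients opposite. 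A genuinely ``parallel'' computation carried to the same depth produces two different integrals that neither cancel nor combine.

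Second, your explicit evaluation of the surviving term is not viable as stated. After expanding via (\ref{4}) the two arguments are $(cx+h)/k$ and $(bx+a)/k$, with slopes $c/k$ and $b/k$; no rescaling $u=x/q$ puts them into a common linear form unless $b=c$, and Proposition \ref{th-i2} in any case returns an alternating sum of endpoint products, not a single value $\overline{B}_{p+1}(\beta)/(p+1)$. The paper instead converts the unweighted integral back into the finite sum $\sum_{n}\chi_{1}(n)\overline{B}_{p+1,\chi_{2}}(bn/c)$ by a second application of Euler--MacLaurin (its (\ref{11})), and evaluates that sum by writing $n=h+mk$ and invoking Raabe's multiplication formula; this is where the double sum $\sum_{h,a}\chi_{1}(h)\overline{\chi}_{2}(a)\overline{B}_{p+1}\left(  (ca+bh)/(qk)\right)$ and the factor $q^{p+1}k^{p-1}$ actually come from. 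Your instinct that $q>1$ is what prevents the right-hand side from collapsing to a single Bernoulli value is correct, but the mechanism is the multiplication formula applied to a finite character sum, not a direct integral evaluation.
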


Secondly, we define the sum $\widehat{S}_{p}\left(  b,c:\chi_{1},\chi
_{2}\right)  $ for primitive characters $\chi_{1}$ and $\chi_{2}$ having
modulus $k_{1}$ and $k_{2},$ need not to be same, by
\[
\widehat{S}_{p}\left(  b,c:\chi_{1},\chi_{2}\right)  =\sum\limits_{n=0}%
^{ck_{1}k_{2}-1}\chi_{1}\left(  n\right)  \overline{B}_{p,\chi_{2}}\left(
\frac{nb}{c}\right)  \overline{B}_{1}\left(  \frac{n}{ck_{1}k_{2}}\right)  ,
\]
which reduces to $s_{p}\left(  b,c:\chi_{1},\chi_{2}\right)  $ for
$k_{1}=k_{2}.$

Finally, for primitive characters $\chi_{1}$ and $\chi_{2}$ having modulus
$k_{1}$ and $k_{2},$ we consider the following sum\textbf{ }%
\[
\widetilde{S}_{p}\left(  b,c:\chi_{1},\chi_{2}\right)  =\sum\limits_{n=0}%
^{ck_{1}-1}\chi_{1}\left(  n\right)  \overline{B}_{p,\chi_{2}}\left(
\frac{nbk_{2}}{ck_{1}}\right)  \overline{B}_{1}\left(  \frac{n}{ck_{1}%
}\right)  ,
\]
which generalizes the previous sums. More clearly, $\widetilde{S}_{p}\left(
b,c:\chi_{1},\chi_{2}\right)  =s_{p}\left(  b,c:\chi_{1},\chi_{2}\right)  $
for $k_{1}=k_{2},$ and $\widetilde{S}_{p}\left(  bk_{1},ck_{2}:\chi_{1}%
,\chi_{2}\right)  =\widehat{S}_{p}\left(  b,c:\chi_{1},\chi_{2}\right)  .$ We
obtain the following reciprocity formula for $\widetilde{S}_{p}\left(
b,c:\chi_{1},\chi_{2}\right)  .$

\begin{theorem}
\label{rp2}Let $b,$ $c$ be positive integers with $q=\gcd\left(  b,c\right)  $
and $p>1$. Let $\chi_{1}$ and $\chi_{2}$ be non-principal primitive characters
of modulus $k_{1}$ and $k_{2},$ respectively.\ For $\left(  -1\right)
^{p+1}\chi_{1}(-1)\chi_{2}(-1)=1$ the following reciprocity formula holds%
\begin{align*}
&  \left(  p+1\right)  \left(  bk_{2}\left(  ck_{1}\right)  ^{p}\text{
}\widetilde{S}_{p}\left(  b,c:\chi_{1},\chi_{2}\right)  +\frac{{}}{{}}%
ck_{1}\left(  bk_{2}\right)  ^{p}\text{ }\widetilde{S}_{p}\left(
c,b:\overline{\chi_{2}},\overline{\chi_{1}}\right)  \right) \\
&  \quad=\sum\limits_{j=0}^{p+1}\binom{p+1}{j}\left(  bk_{2}\right)
^{j}\left(  ck_{1}\right)  ^{p+1-j}B_{j,\overline{\chi_{1}}}B_{p+1-j,\chi_{2}%
}\\
&  \qquad+pq^{p+1}\left(  k_{1}k_{2}\right)  ^{p}\sum\limits_{h=1}^{k_{1}}%
\sum\limits_{j=1}^{k_{2}}\chi_{1}(h)\overline{\chi}_{2}(j)\overline{B}%
_{p+1}\left(  \frac{cj}{qk_{2}}+\frac{bh}{qk_{1}}\right)  .
\end{align*}

\end{theorem}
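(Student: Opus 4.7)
The strategy mirrors the proof of Theorem~\ref{rp1}, combining the character Euler--MacLaurin summation formula with the integral of products of Bernoulli polynomials that the abstract advertises. First, I would apply the character Euler--MacLaurin formula (with character $\chi_{1}$) to the sum $\widetilde{S}_{p}(b,c:\chi_{1},\chi_{2})$, viewed as $\sum_{n=0}^{ck_{1}-1}\chi_{1}(n)F(n)$ with
\[
F(t)=\overline{B}_{p,\chi_{2}}\!\left(\tfrac{bk_{2}t}{ck_{1}}\right)\overline{B}_{1}\!\left(\tfrac{t}{ck_{1}}\right).
\]
This converts the sum into a sum of integrals of $F^{(j)}$ against $\overline{B}_{j,\chi_{1}}$ plus boundary terms that, evaluated at $0$ and $ck_{1}$, produce the generalized Bernoulli numbers $B_{j,\overline{\chi_{1}}}$ responsible for the first sum on the right-hand side. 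The Leibniz rule applied to $F^{(j)}$ produces a manageable collection of product integrals, and the jumps of $\overline{B}_{1}$ at integers account for the discrete contributions.

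Second, I would reduce the continuous part to the integral
\[
\int_{0}^{ck_{1}}\overline{B}_{p,\chi_{2}}\!\left(\tfrac{bk_{2}t}{ck_{1}}\right)\overline{B}_{1,\chi_{1}}(t)\,dt,
\]
which after expanding each character Bernoulli function via its standard decomposition $\overline{B}_{n,\chi}(x)=k^{n-1}\sum_{a}\overline{\chi}(a)\overline{B}_{n}((x+a)/k)$ falls exactly into the scope of the integral
\[
\int_{0}^{x}B_{n_{1}}(b_{1}z+y_{1})\cdots B_{n_{r}}(b_{r}z+y_{r})\,dz
\]
studied earlier in the paper. The double sum indexed by $(h,j)\in\{1,\dots,k_{1}\}\times\{1,\dots,k_{2}\}$ with argument $cj/(qk_{2})+bh/(qk_{1})$ emerges naturally from this decomposition: the argument is the common shift of the two affine pieces, and $q=\gcd(b,c)$ appears through a linear change of variables that rescales the period and contributes the factor $q^{p+1}$.

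Third, I would apply the same procedure to $\widetilde{S}_{p}(c,b:\overline{\chi_{2}},\overline{\chi_{1}})$ (this is essentially the ``conjugate'' computation, where the roles of $b,c$ and of $\chi_{1},\chi_{2}$ are swapped). Using the reflection identities $\overline{B}_{n,\chi}(-x)=(-1)^{n}\chi(-1)\overline{B}_{n,\chi}(x)$ (away from the exceptional set), the parity hypothesis $(-1)^{p+1}\chi_{1}(-1)\chi_{2}(-1)=1$ is exactly what is needed so that the two integral contributions \emph{add} rather than cancel when one forms the weighted combination $bk_{2}(ck_{1})^{p}\widetilde{S}_{p}(b,c:\chi_{1},\chi_{2})+ck_{1}(bk_{2})^{p}\widetilde{S}_{p}(c,b:\overline{\chi_{2}},\overline{\chi_{1}})$; under the opposite parity, the left-hand side is forced to vanish by the same reflection, consistent with the factor $(-1)^{p+1}\chi_{1}(-1)\chi_{2}(-1)$ on the right-hand side.

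The main obstacle I anticipate is the careful bookkeeping of the discontinuity contributions: the function $\overline{B}_{p,\chi_{2}}(bk_{2}t/(ck_{1}))$ has jumps at every $t\in[0,ck_{1}]$ for which $bk_{2}t/(ck_{1})\in\mathbb{Z}$, and parametrizing this set via pairs $(h,j)$ modulo $q$, multiplying by the correct character values, and showing that the resulting expression collapses precisely to $pq^{p+1}(k_{1}k_{2})^{p}\sum_{h=1}^{k_{1}}\sum_{j=1}^{k_{2}}\chi_{1}(h)\overline{\chi}_{2}(j)\overline{B}_{p+1}(cj/(qk_{2})+bh/(qk_{1}))$ is the technically delicate step. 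The clean power $q^{p+1}$ is where I expect the calculation to demand the most care.
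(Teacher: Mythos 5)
Your overall strategy --- apply the character Euler--MacLaurin formula (Theorem \ref{E-M}) to $\widetilde{S}_{p}\left(  b,c:\chi_{1},\chi_{2}\right)$ and to its partner, mirroring Theorem \ref{rp1} --- is indeed the paper's strategy, and the first and third stages of your plan are broadly sound. But the plan breaks down at exactly the point you flag as the delicate step. You attribute the double sum $pq^{p+1}\left(k_{1}k_{2}\right)^{p}\sum_{h}\sum_{j}\chi_{1}(h)\overline{\chi}_{2}(j)\overline{B}_{p+1}\left(\frac{cj}{qk_{2}}+\frac{bh}{qk_{1}}\right)$ to ``discontinuity contributions'' from jumps of $\overline{B}_{p,\chi_{2}}\left(bk_{2}t/(ck_{1})\right)$. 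For $p>1$ this function has no jumps: by (\ref{8}) it is continuous (indeed $C^{p-2}$), which is precisely why the hypothesis $p>1$ appears and why Theorem \ref{E-M} applies cleanly to $f(x)=x\overline{B}_{p,\chi_{2}}\left(xbk_{2}/(ck_{1})\right)$. There is no exceptional set to parametrize by pairs $(h,j)$, so the mechanism you propose for producing the double sum does not exist. In the paper that term arises quite differently: each Euler--MacLaurin expansion leaves a residual term proportional to $\sum_{n=1}^{ck_{1}}\chi_{1}(n)\overline{B}_{p+1,\chi_{2}}\left(nbk_{2}/(ck_{1})\right)$ (the analogue of (\ref{11})), and this sum is evaluated in closed form --- equation (\ref{lek3}) --- by writing $n=h+mk_{1}$, expanding $\overline{B}_{p+1,\chi_{2}}$ via (\ref{4}), and invoking the Raabe multiplication formula $\sum_{m=0}^{c-1}\overline{B}_{p+1}\left((m+x)/c\right)=c^{-p}\overline{B}_{p+1}\left(x\right)$; the power $q^{p+1}$ then comes from the elementary reduction of the non-coprime case to the coprime one, not from rescaling a set of discontinuities. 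None of this is in your plan.

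A second, related issue is that the role of the parity hypothesis is the reverse of what you describe. The two expansions (taken with $l+1=p-1$, and with $l=0$ after swapping $b\leftrightarrow c$ and $\chi_{1}\leftrightarrow\overline{\chi_{2}}$, as in (\ref{14})--(\ref{15})) each contain the same unevaluable integral $\int_{0}^{k}x\overline{B}_{p-1,\overline{\chi_{1}}}\left(cx\right)\overline{B}_{1,\chi_{2}}\left(bx\right)dx$, and the condition $\left(-1\right)^{p+1}\chi_{1}(-1)\chi_{2}(-1)=1$ is exactly what makes these two contributions \emph{cancel} in the weighted combination; that cancellation is the heart of the proof, and without identifying it you are left holding an integral you cannot compute. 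Your fallback --- evaluating the surviving integral via the product formulas of Section 4 --- is not what the paper does, and would in any case require first cutting $\left[0,ck_{1}\right]$ into subintervals on which the periodized Bernoulli functions coincide with genuine polynomials, extra bookkeeping your plan does not account for.
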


In particular we have the following corollary.

\begin{corollary}
\label{rp3}Let $b,$ $c$ be positive integers with $q=\gcd\left(  b,c\right)  $
and $p>1$. Let $\chi_{1}$ and $\chi_{2}$ be non-principal primitive characters
of modulus $k_{1}$ and $k_{2},$ respectively.\ For $\left(  -1\right)
^{p+1}\chi_{1}(-1)\chi_{2}(-1)=1$ and $k_{1}\not =k_{2},$ the following
reciprocity formula holds
\[
\left(  p+1\right)  \left(  bc^{p}\text{ }\widehat{S}_{p}\left(
b,c:\overline{\chi_{1}},\chi_{2}\right)  +\frac{{}}{{}}cb^{p}\text{
}\widehat{S}_{p}\left(  c,b:\overline{\chi_{2}},\chi_{1}\right)  \right)
=\sum\limits_{j=0}^{p+1}\binom{p+1}{j}c^{j}b^{p+1-j}B_{p+1-j,\chi_{1}%
}B_{j,\chi_{2}}.
\]

\end{corollary}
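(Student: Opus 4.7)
The plan is to obtain Corollary \ref{rp3} as a specialization of Theorem \ref{rp2}, via the substitutions $b \mapsto bk_{1}$, $c \mapsto ck_{2}$ together with the replacement of the character pair $(\chi_{1},\chi_{2})$ by $(\overline{\chi_{1}},\chi_{2})$. The parity hypothesis $(-1)^{p+1}\overline{\chi_{1}}(-1)\chi_{2}(-1)=1$ is inherited automatically since $\overline{\chi_{1}}(-1)=\chi_{1}(-1)$, and the modulus of $\overline{\chi_{1}}$ is still $k_{1}$, so the shape of Theorem \ref{rp2} is preserved.

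First I would verify the translation identities $\widetilde{S}_{p}(bk_{1},ck_{2}:\overline{\chi_{1}},\chi_{2})=\widehat{S}_{p}(b,c:\overline{\chi_{1}},\chi_{2})$ and $\widetilde{S}_{p}(ck_{2},bk_{1}:\overline{\chi_{2}},\chi_{1})=\widehat{S}_{p}(c,b:\overline{\chi_{2}},\chi_{1})$ by direct substitution into the definition of $\widetilde{S}_{p}$, where the cancellations $(bk_{1})k_{2}/((ck_{2})k_{1})=b/c$ and $n/((ck_{2})k_{1})=n/(ck_{1}k_{2})$ align the summand with that of $\widehat{S}_{p}$. After this substitution the LHS of Theorem \ref{rp2} carries a common factor of $(k_{1}k_{2})^{p+1}$, and the same factor appears in the polynomial term on the RHS because $(bk_{1}\cdot k_{2})^{j}(ck_{2}\cdot k_{1})^{p+1-j}=(k_{1}k_{2})^{p+1}b^{j}c^{p+1-j}$. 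Cancelling $(k_{1}k_{2})^{p+1}$ and reindexing $j\mapsto p+1-j$ in the binomial sum reproduces exactly the right-hand side claimed in the corollary.

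The task therefore reduces to showing that the residual contribution $p(q')^{p+1}(k_{1}k_{2})^{p}\sum_{h=1}^{k_{1}}\sum_{j=1}^{k_{2}}\overline{\chi_{1}}(h)\overline{\chi_{2}}(j)\overline{B}_{p+1}\bigl((cj+bh)/q'\bigr)$, with $q'=\gcd(bk_{1},ck_{2})$, vanishes under the hypothesis $k_{1}\neq k_{2}$. My approach is to expand $\overline{B}_{p+1}$ in its Fourier series and interchange summations; the inner character sums then take the form $\sum_{h=1}^{k_{1}}\overline{\chi_{1}}(h)e^{2\pi i h r_{1}/k_{1}}$ with $r_{1}=mbk_{1}/q'\in\mathbb{Z}$ (an integer because $q'\mid bk_{1}$), which by the standard Gauss-sum formula for primitive characters collapses to $\chi_{1}(r_{1})\tau(\overline{\chi_{1}})$, and similarly the $j$-sum becomes $\chi_{2}(r_{2})\tau(\overline{\chi_{2}})$ with $r_{2}=mck_{2}/q'$. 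One then analyses $\gcd(bk_{1}/q',k_{1})$ and $\gcd(ck_{2}/q',k_{2})$ in terms of $\gcd(k_{1},k_{2})$ to force, under $k_{1}\neq k_{2}$, at least one of $\chi_{1}(r_{1})$ or $\chi_{2}(r_{2})$ to vanish for every nonzero $m$, so that every Fourier coefficient dies.

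The main obstacle is precisely this last vanishing step. The Fourier/Gauss-sum reduction is routine, but the divisibility bookkeeping becomes genuinely delicate when $\gcd(k_{1},k_{2})>1$, where $q'$ need not be $\gcd(k_{1},k_{2})\cdot q$ and the relevant ratios $bk_{1}/q',ck_{2}/q'$ can interact nontrivially with $k_{1}$ and $k_{2}$; handling this cleanly will require either a careful case-split on the prime factorisations of $k_{1},k_{2},b,c$ or invoking the primitivity of $\chi_{1},\chi_{2}$ to factor the relevant sum as a product of partial Gauss sums that must individually vanish. I expect this analysis to be where the real work of the proof lies.
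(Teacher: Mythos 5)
Your front end is exactly the paper's intended derivation: Corollary \ref{rp3} is obtained there as an immediate specialization of Theorem \ref{rp2} under $b\mapsto bk_{1}$, $c\mapsto ck_{2}$, $\chi_{1}\mapsto\overline{\chi_{1}}$, using $\widetilde{S}_{p}(bk_{1},ck_{2}:\chi_{1},\chi_{2})=\widehat{S}_{p}(b,c:\chi_{1},\chi_{2})$, and your bookkeeping of the common factor $(k_{1}k_{2})^{p+1}$ and the reindexing $j\mapsto p+1-j$ is correct. But the proof is not complete: the only content of the corollary beyond Theorem \ref{rp2} is precisely the vanishing of the residual double sum, and you leave that step open.

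Moreover, that gap cannot be closed under the stated hypotheses. Pushing your own Gauss--sum reduction to the end, the residual term is a nonzero multiple of $\chi_{1}(bk_{1}/q')\,\chi_{2}(ck_{2}/q')$ times an $L$--value that does not vanish for $p+1\geq 2$, where $q'=\gcd(bk_{1},ck_{2})$; hence it vanishes exactly when $\gcd(bk_{1}/q',k_{1})>1$ or $\gcd(ck_{2}/q',k_{2})>1$, and this does not follow from $k_{1}\neq k_{2}$ alone. Concretely, take $p=2$, $b=2$, $c=1$, $\chi_{1}$ the primitive character mod $k_{1}=4$ and $\chi_{2}$ the even primitive character mod $k_{2}=8$ (so the parity hypothesis holds): then $q'=8$, $bk_{1}/q'=ck_{2}/q'=1$, and the double sum is $\sum_{h=1}^{4}\sum_{j=1}^{8}\chi_{1}(h)\chi_{2}(j)\overline{B}_{3}\bigl((j+2h)/8\bigr)=9/32\neq 0$, so the displayed identity fails by a nonzero amount. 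The statement is salvageable under an extra coprimality hypothesis such as $\gcd(bc,k_{1}k_{2})=1$: then $q'=\gcd(b,c)\gcd(k_{1},k_{2})$, the two gcds above become $k_{1}/\gcd(k_{1},k_{2})$ and $k_{2}/\gcd(k_{1},k_{2})$, and at least one exceeds $1$ precisely because $k_{1}\neq k_{2}$, so one of the character values is $0$ and no Fourier analysis of the $m$--sum is needed at all. In short, the difficulty you anticipate is not delicate divisibility bookkeeping but a missing hypothesis; your proof should either add it or record the precise vanishing criterion above.
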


Moreover, we derive an explicit formula for the following type integral
\[
\int\limits_{0}^{x}B_{n_{1}}\left(  b_{1}z+y_{1}\right)  \cdots B_{n_{r}%
}\left(  b_{r}z+y_{r}\right)  dz
\]
(see Proposition \ref{th-i2}). Since this theorem is also valid for the Appell
polynomials, the earlier results given by Liu et al \cite{lpz}, Hu et al
\cite{hkk}, and Agoh and Dilcher \cite{ad} are direct consequences of the
derived formula (see Remark \ref{rem1}). As a consequence of this integral, we
have the following reciprocity relation for Bernoulli polynomials, which
generalizes \cite[Proposition 2]{ad}.

\begin{corollary}
\label{cor2}For all $n,m\geq0$ we have%
\begin{align}
&  \sum\limits_{a=0}^{n}\left(  -1\right)  ^{a}\binom{m+n+1}{n-a}b_{1}%
^{a}b_{2}^{-a-1}B_{n-a}\left(  b_{1}x+y_{1}\right)  B_{m+a+1}\left(
b_{2}x+y_{2}\right) \nonumber\\
&  \quad-\sum\limits_{a=0}^{m}\left(  -1\right)  ^{a}\binom{m+n+1}{m-a}%
b_{2}^{a}b_{1}^{-a-1}B_{m-a}\left(  b_{2}x+y_{2}\right)  B_{n+a+1}\left(
b_{1}x+y_{1}\right) \nonumber\\
&  =\frac{\left(  -1\right)  ^{m+1}}{b_{1}^{m+1}b_{2}^{n+1}}\sum
\limits_{a=0}^{m+n+1}\left(  -1\right)  ^{a}\binom{m+n+1}{a}b_{1}^{a}%
b_{2}^{m+n+1-a}B_{m+n+1-a}\left(  y_{1}\right)  B_{a}\left(  y_{2}\right)  ,
\label{24}%
\end{align}
where $b_{l}$ $\left(  b_{l}\not =0\right)  $ and $y_{l}$ $\left(  1\leq l\leq
r\right)  $ are real numbers.
\end{corollary}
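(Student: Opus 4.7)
The plan is to evaluate
\[
I(x) = \int_0^x B_n(b_1 z + y_1)\, B_m(b_2 z + y_2)\, dz
\]
in two different ways and compare. This integral is the case $r=2$, $n_1=n$, $n_2=m$ of Proposition \ref{th-i2}, but for the present corollary iterated integration by parts will suffice.

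Using $B_k'(u) = k\, B_{k-1}(u)$ and the antiderivative $\int B_k(bz+y)\,dz = B_{k+1}(bz+y)/(b(k+1))$, I will integrate by parts $n+1$ times, differentiating the first Bernoulli factor and integrating the second, until the first factor reduces to $B_0 = 1$. A short induction on the iteration shows that the result is
\[
I(x)=\frac{m!\,n!}{(m+n+1)!}\sum_{a=0}^n (-1)^a \binom{m+n+1}{n-a} b_1^a b_2^{-a-1} \bigl[B_{n-a}(b_1 z + y_1) B_{m+a+1}(b_2 z + y_2)\bigr]_0^x,
\]
and the symmetric procedure, lowering the degree of $B_m(b_2 z+y_2)$ instead, yields the analogous formula with $(n,b_1,y_1)$ and $(m,b_2,y_2)$ interchanged.

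Equating the two representations of $I(x)$ and multiplying through by $(m+n+1)!/(m!\,n!)$, the $z=x$ parts of the boundary terms assemble into the difference displayed on the left-hand side of (\ref{24}), so the corollary reduces to showing that the difference of the $z=0$ contributions equals the right-hand side of (\ref{24}). For this I will re-index each of the two $z=0$ sums so that its running variable becomes the index of the factor $B_\ast(y_2)$: the first sum then runs over $a = m+1,\ldots,m+n+1$ and the second over $a = 0,\ldots,m$, and after collecting signs and powers of $b_1$ and $b_2$ the summands coincide on the two ranges. Their union $a = 0,\ldots,m+n+1$ then reproduces the single sum on the right of (\ref{24}) with the stated prefactor $(-1)^{m+1}/(b_1^{m+1} b_2^{n+1})$.

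The main obstacle is purely combinatorial bookkeeping: verifying that the iterated integration by parts yields the coefficient $(-1)^a\binom{m+n+1}{n-a}$ after the common normalization, and then tracking signs and powers of $b_1, b_2$ through the two reindexings so that the tails $\{0,\ldots,m\}$ and $\{m+1,\ldots,m+n+1\}$ glue together into the full range $\{0,\ldots,m+n+1\}$. No further analytic input is required beyond the derivative and antiderivative rules for Bernoulli polynomials.
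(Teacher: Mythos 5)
Your proposal is correct and follows essentially the same route as the paper: equating the two iterated integration-by-parts expansions of $\int_0^x B_n(b_1z+y_1)B_m(b_2z+y_2)\,dz$ is exactly Proposition \ref{th-i2} with $r=2$ combined with the symmetry of Corollary \ref{cor1} (yielding the paper's identity (\ref{25})), and your re-indexing of the $z=0$ contributions so that the ranges $\{0,\dots,m\}$ and $\{m+1,\dots,m+n+1\}$ glue into $\{0,\dots,m+n+1\}$ is the paper's computation of $T$ in (\ref{29}). The coefficients $(-1)^a\binom{m+n+1}{n-a}b_1^a b_2^{-a-1}$ you claim from the integration by parts are the correct ones, so the argument goes through as stated.
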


\begin{remark}
\textbf{1)} If $\left(  n+m\right)  $ is even and $y_{1},$ $y_{2}$ are in
$\left\{  0,1/2,1\right\}  ,$ then the right-hand side of (\ref{24}) reduces
to
\[
\left(  -1\right)  ^{n}\frac{m+n+1}{b_{1}}\left\{  \left(  \frac{b_{2}}{b_{1}%
}\right)  ^{m-1}B_{m+n}\left(  y_{1}\right)  B_{1}\left(  y_{2}\right)
-\left(  \frac{b_{1}}{b_{2}}\right)  ^{n}B_{1}\left(  y_{1}\right)
B_{m+n}\left(  y_{2}\right)  \right\}  .
\]
\textbf{2) }If $\left(  n+m\right)  $ is odd and $y_{1}=$ $y_{2}=0$ or $1,$
then the right-hand side of (\ref{24}) is closely related to the reciprocity
formula of Apostol's Dedekind sums given by (\ref{dr1}). In fact, we have for
all $x$ and odd integer\textbf{ }$p=m+n,$ $\left(  n,m\geq0\right)  $\textbf{
}%
\begin{align*}
&  \left(  p+1\right)  \left(  b_{1}b_{2}^{p}s_{p}(b_{1},b_{2})+b_{2}b_{1}%
^{p}s_{p}(b_{2},b_{1})\right) \\
&  =\sum\limits_{a=0}^{n}\left(  -1\right)  ^{n-a}\binom{m+n+1}{n-a}%
b_{1}^{m+a+1}b_{2}^{n-a}B_{n-a}\left(  b_{1}x\right)  B_{m+a+1}\left(
b_{2}x\right) \\
&  \quad+\sum\limits_{a=0}^{m}\left(  -1\right)  ^{m-a}\binom{m+n+1}{m-a}%
b_{2}^{n+a+1}b_{1}^{m-a}B_{m-a}\left(  b_{2}x\right)  B_{n+a+1}\left(
b_{1}x\right)  +q^{p+1}pB_{p+1}\\
&  =\sum\limits_{a=0}^{m+n+1}\left(  -1\right)  ^{a}\binom{m+n+1}{a}b_{1}%
^{a}b_{2}^{m+n+1-a}B_{m+n+1-a}B_{a}+q^{p+1}pB_{p+1},
\end{align*}
where $q=\gcd\left(  b_{1},b_{2}\right)  .$
\end{remark}

We further derive the Laplace transform of $\overline{B}_{n}\left(
tu+y\right)  $ (see (\ref{16})), which coincides with \cite[Lemma 4]{ml} for
$t=1$ and $y\in\mathbb{Z}$. Similar results are also obtained for the
generalized Bernoulli polynomials.

We summarize this study as follows: Section 2 is the preliminary section where
we give definitions and known results needed. In Section 3, we prove the
reciprocity formulas for character Dedekind sums by using the character
analogue of the Euler--MacLaurin summation formula. In Section 4, we first
derive a formula for the integral having more general integrands. By this, we
extend known results on the integral of products of Appell polynomials\ and
illustrate a formula for the integral of products of Bernoulli polynomials.
Furthermore, the proof of Corollary \ref{cor2} and the Laplace transform of
$\overline{B}_{n}\left(  tu+y\right)  $ are given.

\section{Preliminaries}

The Bernoulli polynomials $B_{n}(x)$ are defined by means of the generating
function
\begin{equation}
\frac{te^{xt}}{e^{t}-1}=\sum\limits_{n=0}^{\infty}B_{n}(x)\frac{t^{n}}%
{n!}\quad\left(  \left\vert t\right\vert <2\pi\right)  \label{0}%
\end{equation}
and $B_{n}=B_{n}(0)$ are the Bernoulli numbers with $B_{0}=1,$ $B_{1}=-1/2$
and $B_{2n+1}=B_{2n-1}\left(  1/2\right)  =0$ for $n\geq1.$

Let $\chi$ be a primitive character of modulus $k.$ The generalized Bernoulli
polynomials $B_{n,\chi}\left(  x\right)  $ are defined by means of the
generating function \cite{b5}
\[
\sum\limits_{a=0}^{k-1}\frac{\overline{\chi}(a)te^{(a+x)t}}{e^{kt}-1}%
=\sum\limits_{n=0}^{\infty}B_{n,\chi}\left(  x\right)  \frac{t^{n}}{n!}%
\quad\left(  \left\vert t\right\vert <2\pi/k\right)
\]
and $B_{n,\chi}=B_{n,\chi}\left(  0\right)  $ are the generalized Bernoulli
numbers. In particular, if $\chi_{0}$ is the principal character, then
$B_{n,\chi_{0}}\left(  x\right)  =B_{n}\left(  x\right)  $ for $n\geq0$\ and
$B_{0,\chi}\left(  x\right)  =0$ for $\chi\not =\chi_{0}.$ The generalized
Bernoulli functions $\overline{B}_{n,\chi}\left(  x\right)  ,$ are functions
with period $k$, may be defined by (\cite[Theorem 3.1]{b5})
\begin{equation}
\overline{B}_{m,\chi}\left(  x\right)  =k^{m-1}\sum\limits_{n=1}%
^{k-1}\overline{\chi}(n)\overline{B}_{m}\left(  \frac{n+x}{k}\right)  ,\text{
}m\geq1, \label{4}%
\end{equation}
for all real $x$. We\ recall some properties that we need in the sequel.%
\begin{align}
&  \frac{d}{dx}B_{m}\left(  x\right)  =mB_{m-1}\left(  x\right)  \text{ and
}\frac{d}{dx}B_{m,\chi}\left(  x\right)  =mB_{m-1,\chi}\left(  x\right)
,\text{ }m\geq1,\label{1}\\
&  \frac{d}{dx}\overline{B}_{m,\chi}\left(  x\right)  =m\overline{B}%
_{m-1,\chi}\left(  x\right)  ,\text{ }m\geq2\text{ and }\overline{B}_{m,\chi
}\left(  k\right)  =\overline{B}_{m,\chi}\left(  0\right)  =B_{m,\chi
},\label{8}\\
&  B_{m,\chi}\left(  -x\right)  =\left(  -1\right)  ^{m}\chi\left(  -1\right)
B_{m,\chi}\left(  x\right)  ,\text{ }m\geq0. \label{3}%
\end{align}
It is also known that the degree of Bernoulli polynomials $B_{n}\left(
x\right)  $\ is $n,$ and the degree of generalized Bernoulli polynomials
$B_{n,\chi}\left(  x\right)  $\ is not greater than $n-1.$

We also need the character analogue of the Euler--MacLaurin summation formula,
due to Berndt \cite{b5}, which is presented here in the following form.

\begin{theorem}
(\cite[Theorem 4.1]{b5})\label{E-M} Let $f\in C^{(l+1)}\left[  \alpha
,\beta\right]  ,$ $-\infty<\alpha<\beta<\infty.$ Then,%
\begin{align*}
\sum_{\alpha\leq n\leq\beta}\hspace{-0.05in}^{^{\prime}}\chi\left(  n\right)
f(n)  &  =\chi\left(  -1\right)  \sum\limits_{j=0}^{l}\frac{\left(  -1\right)
^{j+1}}{(j+1)!}\left(  \overline{B}_{j+1,\overline{\chi}}\left(  \beta\right)
f^{(j)}(\beta)-\overline{B}_{j+1,\overline{\chi}}(\alpha)f^{(j)}%
(\alpha)^{\text{\ }}\right) \\
&  \quad+\chi\left(  -1\right)  \frac{(-1)^{l}}{(l+1)!}\int\limits_{\alpha
}^{\beta}\overline{B}_{l+1,\overline{\chi}}\left(  u\right)  f^{(l+1)}(u)du.
\end{align*}
where the dash indicates that if $n=\alpha$ or $n=\beta$, then only $\frac
{1}{2}\chi(\alpha)f(\alpha)$ or $\frac{1}{2}\chi(\beta)f(\beta)$ is counted, respectively.
\end{theorem}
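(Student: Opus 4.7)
The plan is to prove this by induction on $l$, with $l=0$ carrying the analytic content and the inductive step being a direct integration by parts. For the base case, the identity to establish is
\[
\sum_{\alpha \leq n \leq \beta}^{\prime} \chi(n) f(n) = \chi(-1)\bigl[\overline{B}_{1,\overline{\chi}}(\alpha)f(\alpha) - \overline{B}_{1,\overline{\chi}}(\beta)f(\beta)\bigr] + \chi(-1) \int_{\alpha}^{\beta} \overline{B}_{1,\overline{\chi}}(u) f'(u)\,du.
\]
Starting from the integral on the right and performing a single integration by parts produces the boundary terms together with $-\int_{\alpha}^{\beta} \overline{B}_{1,\overline{\chi}}'(u) f(u)\,du$, where the derivative is understood distributionally. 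Using the representation $\overline{B}_{1,\overline{\chi}}(u) = \sum_{a=1}^{k-1} \chi(a) \overline{B}_{1}((a+u)/k)$ coming from (\ref{4}), the smooth part of the derivative equals $\tfrac{1}{k}\sum_{a=1}^{k-1}\chi(a) = 0$ because $\chi$ is non-principal. The singular part is concentrated at integers: at each integer $n$, the unique index $a \in \{1,\dots,k-1\}$ with $a \equiv -n \pmod{k}$ makes $(a+n)/k$ an integer, and the standard unit downward jump of $\overline{B}_{1}$ at integers contributes a jump of $-\chi(a) = -\chi(-1)\chi(n)$ to $\overline{B}_{1,\overline{\chi}}$. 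Collecting these Dirac contributions across $\alpha \leq n \leq \beta$ reproduces $\chi(-1)\sum^{\prime}\chi(n)f(n)$, with the half-weights at the endpoints arising naturally from a symmetric treatment of a jump located exactly at $\alpha$ or $\beta$.

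For the inductive step, assume the formula holds at level $l-1$. By (\ref{8}) we have $\tfrac{d}{du}\overline{B}_{l+1,\overline{\chi}}(u) = (l+1)\overline{B}_{l,\overline{\chi}}(u)$ whenever $l+1 \geq 2$, so a single integration by parts yields
\[
\int_{\alpha}^{\beta} \overline{B}_{l+1,\overline{\chi}}(u) f^{(l+1)}(u)\,du = \bigl[\overline{B}_{l+1,\overline{\chi}}(u) f^{(l)}(u)\bigr]_{\alpha}^{\beta} - (l+1) \int_{\alpha}^{\beta} \overline{B}_{l,\overline{\chi}}(u) f^{(l)}(u)\,du.
\]
Solving for the lower-index integral and substituting into the level-$(l-1)$ formula replaces its remainder by a new boundary contribution, which matches exactly the $j=l$ summand in the theorem's boundary sum, plus the level-$l$ integral remainder with the correct sign and factorial, closing the induction.

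The main obstacle is the base case: one has to account carefully for the distributional derivative of $\overline{B}_{1,\overline{\chi}}$ and, when $\alpha$ or $\beta$ happens to be an integer, match the half-weight prescribed by the dash to a half-Dirac boundary contribution. Once that jump bookkeeping is handled, everything past $l = 0$ is purely mechanical and rests only on the differentiation rule (\ref{8}) for generalized Bernoulli functions of index at least two.
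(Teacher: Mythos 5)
Your argument is essentially correct, but note that the paper itself offers no proof of this statement: it is quoted verbatim from Berndt \cite[Theorem~4.1]{b5}, so the only meaningful comparison is with Berndt's original derivation, which obtains the formula from his character analogue of the Poisson summation formula (i.e.\ from the Fourier expansions of the functions $\overline{B}_{n,\overline{\chi}}$) rather than from a jump analysis of $\overline{B}_{1,\overline{\chi}}$. Your route is more elementary and self-contained: the inductive step is the standard integration by parts using $\tfrac{d}{du}\overline{B}_{l+1,\overline{\chi}}(u)=(l+1)\overline{B}_{l,\overline{\chi}}(u)$ for $l+1\geq 2$ from (\ref{8}), and your bookkeeping of signs and factorials does reproduce the $j=l$ boundary term and the level-$l$ remainder exactly. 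The base case is also sound: writing $\overline{B}_{1,\overline{\chi}}(u)=\sum_{a=1}^{k-1}\chi(a)\overline{B}_{1}\left((a+u)/k\right)$ via (\ref{4}), the smooth part of the derivative vanishes because $\sum_{a=1}^{k-1}\chi(a)=0$ for non-principal $\chi$, and at an integer $n$ the unit downward jump of $\overline{B}_{1}$ occurs for the unique $a\equiv -n \pmod{k}$ (no such $a$ exists when $k\mid n$, but then $\chi(n)=0$ anyway), giving the coefficient $-\chi(-1)\chi(n)$ as you claim, and $\chi(-1)^{2}=1$ closes the computation. The one place you should tighten is the treatment of the endpoints: invoking ``a symmetric treatment'' of a Dirac mass sitting exactly at $\alpha$ or $\beta$ is a convention, not a derivation. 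The clean fix is to avoid distributions altogether: split $[\alpha,\beta]$ at the integers, integrate by parts on each subinterval where $\overline{B}_{1,\overline{\chi}}$ is smooth, and use the fact that $\overline{B}_{1}$ takes the value $0$ at integers, i.e.\ the average of its one-sided limits $\pm 1/2$, so that $\overline{B}_{1,\overline{\chi}}(n)$ equals the mean of its one-sided limits; the half-weights in the dashed sum then fall out of the telescoping boundary terms rather than being imposed. With that adjustment your proof is complete, and arguably more transparent than the Poisson-summation derivation it replaces, at the cost of not simultaneously yielding the Fourier-analytic information Berndt uses elsewhere.
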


\section{Proofs of Theorems \ref{rp1} and \ref{rp2}}

We recall the sum $s_{p}\left(  b,c:\chi_{1},\chi_{2}\right)  $ defined by
\[
s_{p}\left(  b,c:\chi_{1},\chi_{2}\right)  =\sum\limits_{n=0}^{ck-1}\chi
_{1}\left(  n\right)  \overline{B}_{p,\chi_{2}}\left(  \frac{bn}{c}\right)
\overline{B}_{1}\left(  \frac{n}{ck}\right)  ,
\]
where $\chi_{1}$ and $\chi_{2}$ are primitive characters of modulus $k.$ In
view of (\ref{3}) it is easy to see that
\begin{equation}
s_{p}\left(  b,c:\chi_{1},\chi_{2}\right)  =\left(  -1\right)  ^{p+1}\chi
_{1}(-1)\chi_{2}(-1)s_{p}\left(  b,c:\chi_{1},\chi_{2}\right)  , \label{10}%
\end{equation}
which entails $s_{p}\left(  b,c:\chi_{1},\chi_{2}\right)  =0$ for $\left(
-1\right)  ^{p+1}\chi_{1}(-1)\chi_{2}(-1)=-1.$

\begin{proof}
[Proof of Theorem \ref{rp1}]With the aid of $\overline{B}_{1}(x)=x-1/2$ for
$0<x<1$, we have%
\begin{equation}
cks_{p}\left(  b,c:\chi_{1},\chi_{2}\right)  =\sum\limits_{n=0}^{ck-1}\chi
_{1}\left(  n\right)  n\overline{B}_{p,\chi_{2}}\left(  \frac{bn}{c}\right)
-\frac{ck}{2}\sum\limits_{n=0}^{ck-1}\chi_{1}\left(  n\right)  \overline
{B}_{p,\chi_{2}}\left(  \frac{bn}{c}\right)  . \label{7}%
\end{equation}
Thus, let $f(x)=x\overline{B}_{p,\chi_{2}}\left(  xb/c\right)  ,$ $\alpha=0$
and $\beta=ck$\ in Theorem \ref{E-M} and let $p>1$. Equation (\ref{8}) entails
that $f\in C^{\left(  p-1\right)  }\left[  \alpha,\beta\right]  $ and
\begin{equation}
\frac{d^{j}}{dx^{j}}f\left(  x\right)  =\frac{p!}{\left(  p-j\right)
!}\left(  \frac{b}{c}\right)  ^{j}x\overline{B}_{p-j,\chi_{2}}\left(  \frac
{b}{c}x\right)  +j\frac{p!}{\left(  p+1-j\right)  !}\left(  \frac{b}%
{c}\right)  ^{j-1}\overline{B}_{p+1-j,\chi_{2}}\left(  \frac{b}{c}x\right)
\label{12}%
\end{equation}
for $0\leq j\leq p-1.$ Therefore with the help of (\ref{8}) and Theorem
\ref{E-M} we have
\begin{align}
\sum_{n=1}^{ck-1}\chi_{1}\left(  n\right)  n\overline{B}_{p,\chi_{2}}\left(
n\frac{b}{c}\right)   &  =\chi_{1}\left(  -1\right)  \frac{ck}{p+1}%
\sum\limits_{j=0}^{l}\left(  -1\right)  ^{j+1}\binom{p+1}{j+1}\left(  \frac
{b}{c}\right)  ^{j}B_{j+1,\overline{\chi_{1}}}B_{p-j,\chi_{2}}\nonumber\\
&  +\chi_{1}\left(  -1\right)  \binom{p}{l+1}\left(  -\frac{b}{c}\right)
^{l}bc\int\limits_{0}^{k}x\overline{B}_{l+1,\overline{\chi_{1}}}\left(
cx\right)  \overline{B}_{p-(l+1),\chi_{2}}\left(  bx\right)  dx\nonumber\\
&  +\chi_{1}\left(  -1\right)  \left(  -1\right)  ^{l}\binom{p}{l}\left(
\frac{b}{c}\right)  ^{l}c\int\limits_{0}^{k}\overline{B}_{l+1,\overline
{\chi_{1}}}\left(  cx\right)  \overline{B}_{p-l,\chi_{2}}\left(  bx\right)  dx
\label{13}%
\end{align}
for $1\leq l+1\leq p-1.$\ On the other hand, applying character analogue of
the Euler--MacLaurin summation formula to the generalized Bernoulli function
$\overline{B}_{p+1,\chi_{2}}\left(  bx/c\right)  $ gives%
\begin{equation}
\sum\limits_{n=1}^{ck-1}\chi_{1}\left(  n\right)  \overline{B}_{p+1,\chi_{2}%
}\left(  \frac{bn}{c}\right)  =\chi_{1}\left(  -1\right)  \left(  -1\right)
^{l}\binom{p+1}{l+1}\left(  \frac{b}{c}\right)  ^{l+1}c\int\limits_{0}%
^{k}\overline{B}_{l+1,\overline{\chi_{1}}}\left(  cx\right)  \overline
{B}_{p-l,\chi_{2}}\left(  bx\right)  dx. \label{11}%
\end{equation}

Let $\left(  -1\right)  ^{p+1}\chi_{1}(-1)\chi_{2}(-1)=1.$ Taking into account
that
\begin{equation}
\sum\limits_{n=1}^{ck-1}\chi_{1}\left(  n\right)  \overline{B}_{p,\chi_{2}%
}\left(  \frac{bn}{c}\right)  =0 \label{lek2-a}%
\end{equation}
for $\left(  -1\right)  ^{p+1}\chi_{1}(-1)\chi_{2}(-1)=1$, it follows from
(\ref{7}), (\ref{13}) and (\ref{11}) that%
\begin{align}
cks_{p}\left(  b,c:\chi_{1},\chi_{2}\right)   &  =\chi_{1}\left(  -1\right)
\frac{ck}{p+1}\sum\limits_{j=0}^{l}\left(  -1\right)  ^{j+1}\binom{p+1}%
{j+1}\left(  \frac{b}{c}\right)  ^{j}B_{j+1,\overline{\chi_{1}}}%
B_{p-j,\chi_{2}}\nonumber\\
&  +\chi_{1}\left(  -1\right)  \binom{p}{l+1}\left(  -\frac{b}{c}\right)
^{l}bc\int\limits_{0}^{k}x\overline{B}_{l+1,\overline{\chi_{1}}}\left(
cx\right)  \overline{B}_{p-(l+1),\chi_{2}}\left(  bx\right)  dx\nonumber\\
&  +\frac{l+1}{p+1}\frac{c}{b}\sum\limits_{n=1}^{ck-1}\chi_{1}\left(
n\right)  \overline{B}_{p+1,\chi_{2}}\left(  \frac{bn}{c}\right)  . \label{2}%
\end{align}
We first set $l+1=p-1$ in (\ref{2}) and then multiply by $bc^{p-1}/k.$ So we
have%
\begin{align}
bc^{p}s_{p}\left(  b,c:\chi_{1},\chi_{2}\right)   &  =\frac{\chi_{1}\left(
-1\right)  }{p+1}\sum\limits_{j=1}^{p-1}\left(  -1\right)  ^{j}\binom{p+1}%
{j}b^{j}c^{p+1-j}B_{j,\overline{\chi_{1}}}B_{p+1-j,\chi_{2}}\nonumber\\
&  +\left(  -1\right)  ^{p}\chi_{1}\left(  -1\right)  \frac{pb^{p}c^{2}}%
{k}\int\limits_{0}^{k}x\overline{B}_{p-1,\overline{\chi_{1}}}\left(
cx\right)  \overline{B}_{1,\chi_{2}}\left(  bx\right)  dx\nonumber\\
&  +\frac{p-1}{p+1}\frac{c^{p}}{k}\sum\limits_{n=1}^{ck-1}\chi_{1}\left(
n\right)  \overline{B}_{p+1,\chi_{2}}\left(  \frac{bn}{c}\right)  . \label{14}%
\end{align}
Now we first set $l=0,$ interchange $b$ and $c$ and also $\chi_{1}$ and
$\overline{\chi_{2}}$ \ in (\ref{2}), and then multiply by $cb^{p-1}/k.$ Thus
we have%
\begin{align}
cb^{p}s_{p}\left(  c,b:\overline{\chi_{2}},\overline{\chi_{1}}\right)   &
=-\chi_{2}\left(  -1\right)  cb^{p}B_{p,\overline{\chi_{1}}}B_{1,\chi_{2}%
}\nonumber\\
&  +\overline{\chi_{2}}\left(  -1\right)  \frac{pb^{p}c^{2}}{k}\int%
\limits_{0}^{k}x\overline{B}_{p-1,\overline{\chi_{1}}}\left(  cx\right)
\overline{B}_{1,\chi_{2}}\left(  bx\right)  dx\nonumber\\
&  +\frac{1}{p+1}\frac{b^{p}}{k}\sum\limits_{n=1}^{bk-1}\overline{\chi_{2}%
}\left(  n\right)  \overline{B}_{p+1,\overline{\chi_{1}}}\left(  \frac{cn}%
{b}\right)  . \label{15}%
\end{align}
We now consider the sum
\[
\sum\limits_{n=1}^{ck-1}\chi_{1}(n)\overline{B}_{p+1,\chi_{2}}\left(
\frac{bn}{c}\right)  .
\]
As mentioned in (\ref{lek2-a}) the sum vanishes when $\left(  -1\right)
^{p+1}\chi_{1}(-1)\chi_{2}(-1)=-1.$ For $\left(  -1\right)  ^{p+1}\chi
_{1}(-1)\chi_{2}(-1)=1$ and $\gcd(b,c)=1,$\ first setting $n=h+mk,$ where
$1\leq h\leq k-1,$ $0\leq m\leq c-1,$ and then using (\ref{4}) and Raabe
formula
\[
\sum\limits_{m=0}^{c-1}\overline{B}_{p+1}\left(  \frac{m+x}{c}\right)
=c^{-p}\overline{B}_{p+1}\left(  x\right)  ,
\]
we deduce that%
\begin{equation}
\sum\limits_{n=1}^{ck-1}\chi_{1}(n)\overline{B}_{p+1,\chi_{2}}\left(
\frac{bn}{c}\right)  =\left(  \frac{k}{c}\right)  ^{p}\sum\limits_{h=1}%
^{k-1}\sum\limits_{j=1}^{k-1}\chi_{1}(h)\overline{\chi}_{2}(j)\overline
{B}_{p+1}\left(  \frac{cj}{k}+\frac{bh}{k}\right)  . \label{lek2}%
\end{equation}
We mention also that if $\gcd(bq,cq)=q,$ then
\[
\sum\limits_{n=1}^{qck-1}\chi_{1}(n)\overline{B}_{p+1,\chi_{2}}\left(
\frac{qbn}{qc}\right)  =q\sum\limits_{n=1}^{ck-1}\chi_{1}(n)\overline
{B}_{p+1,\chi_{2}}\left(  \frac{bn}{c}\right)  .
\]
Therefore, combining (\ref{14}) and (\ref{15}), with the use of $\left(
-1\right)  ^{p+1}\chi_{1}(-1)\chi_{2}(-1)=1,$ (\ref{3}) and (\ref{lek2}), we
obtain the reciprocity formula.
\end{proof}

\begin{remark}
The sum occurs in (\ref{lek2}) was evaluated in \cite[Lemma 5.5]{cck} when
$\chi_{1}=\chi_{2}.$ In that case, the right-hand side of (\ref{lek2}%
)\ becomes $c^{1-p}\chi\left(  c\right)  \overline{\chi}\left(  -b\right)
\left(  k^{p}-1\right)  \overline{B}_{p}\left(  0\right)  $ under the
condition that $k$ is a prime number when $\gcd(k,bc)=1$ and $p$ is even,
otherwise $k$ is an arbitrary integer. By similar method, this sum can be
evaluated when $\chi_{1}\not =\chi_{2}$, in which case we have
\[
\sum\limits_{n=1}^{ck-1}\chi_{1}(n)\overline{B}_{p,\chi_{2}}\left(  \frac
{bn}{c}\right)  =-\epsilon\frac{p!}{(2\pi i)^{p}}\left(  \frac{k}{c}\right)
^{p-1}G(b,\chi_{1})G(c,\overline{\chi_{2}})L\left(  p,\overline{\chi_{1}}%
\chi_{2}\right)
\]
for $\gcd(b,c)=1$ with $c>0.$ Here $\epsilon=1+\left(  -1\right)  ^{p}\chi
_{1}(-1)\chi_{2}(-1)$ and $\overline{\chi_{1}}\chi_{2}$ is a Dirichlet
character modulo $k$, and $G(b,\chi)$ and $L\left(  p,\overline{\chi_{1}}%
\chi_{2}\right)  $ stands for the Gauss sum and the Dirichlet L--function,
respectively. However, the statement of reciprocity formula must be given
separately for the cases $\chi_{1}=\chi_{2}$ and $\chi_{1}\not =\chi_{2}.$
\end{remark}

\begin{proof}
[Proof of Theorem \ref{rp2}]We first note that similar to $s_{p}\left(
b,c:\chi_{1},\chi_{2}\right)  $ the sum $\widetilde{S}_{p}\left(  b,c:\chi
_{1},\chi_{2}\right)  $ vanishes when $\left(  -1\right)  ^{p+1}\chi
_{1}(-1)\chi_{2}(-1)=-1.$ Let $\gcd(b,c)=1$ with $c>0.$ Then the counterpart
of (\ref{lek2}) becomes%
\begin{equation}
\sum\limits_{n=1}^{ck_{1}}\chi_{1}(n)\overline{B}_{p+1,\chi_{2}}\left(
\frac{nbk_{2}}{ck_{1}}\right)  =\left(  \frac{k_{2}}{c}\right)  ^{p}%
\sum\limits_{h=1}^{k_{1}}\sum\limits_{j=1}^{k_{2}}\chi_{1}(h)\overline{\chi
}_{2}(j)\overline{B}_{p+1}\left(  \frac{cj}{k_{2}}+\frac{bh}{k_{1}}\right)
\label{lek3}%
\end{equation}
when $\left(  -1\right)  ^{p+1}\chi_{1}(-1)\chi_{2}(-1)=1,$ and this sum
vanishes when $\left(  -1\right)  ^{p+1}\chi_{1}(-1)\chi_{2}(-1)=-1.$ Now let
$f(x)=x\overline{B}_{p,\chi_{2}}\left(  xbk_{2}/ck_{1}\right)  ,$ $\alpha=0,$
$\beta=ck$ in Theorem \ref{E-M} and $p>1$. Then adopting the arguments in the
proof of Theorem \ref{rp1}, and using (\ref{lek3}) rather than (\ref{lek2}%
),\textbf{ }the desired result follows.
\end{proof}

\subsection{Further consequences}

Let $\left(  -1\right)  ^{p+1}\chi_{1}(-1)\chi_{2}(-1)=1.$ If we take $c=1$
and $b=k$ in (\ref{11}). Using (\ref{8}) and then the fact $\sum_{n=1}%
^{k-1}\chi\left(  n\right)  =0,$ we find that
\[
\int\limits_{0}^{k}\overline{B}_{l+1,\overline{\chi_{1}}}\left(  x\right)
\overline{B}_{p-l,\chi_{2}}\left(  kx\right)  dx=0.
\]
It is also obvious from (\ref{11}), for $b=c=1,$
\[
\chi_{1}\left(  -1\right)  \left(  -1\right)  ^{l+1}\binom{p+1}{l+1}%
\int\limits_{0}^{k}\overline{B}_{l+1,\overline{\chi_{1}}}\left(  x\right)
\overline{B}_{p-l,\chi_{2}}\left(  x\right)  dx=\sum\limits_{n=1}^{k}\chi
_{1}\left(  n\right)  \overline{B}_{p+1,\chi_{2}}\left(  n\right)  .
\]

Now let $\left(  -1\right)  ^{p+1}\chi_{1}(-1)\chi_{2}(-1)=-1.$ Then it
follows from (\ref{11}) and (\ref{lek2-a}) that
\begin{equation}
\int\limits_{0}^{k}\overline{B}_{l+1,\overline{\chi_{1}}}\left(  cx\right)
\overline{B}_{p-l,\chi_{2}}\left(  bx\right)  dx=0. \label{20}%
\end{equation}
It is seen from (\ref{10}) and (\ref{7}) that
\[
\sum\limits_{n=1}^{ck-1}\chi_{1}\left(  n\right)  n\overline{B}_{p,\chi_{2}%
}\left(  \frac{bn}{c}\right)  =\frac{ck}{2}\sum\limits_{n=1}^{ck-1}\chi
_{1}\left(  n\right)  \overline{B}_{p,\chi_{2}}\left(  \frac{bn}{c}\right)  .
\]
Since $B_{p+1-j,\chi_{2}}=\left(  -1\right)  ^{p+1-j}\chi_{2}\left(
-1\right)  B_{p+1-j,\chi_{2}}$ from (\ref{3}), we conclude from (\ref{13}),
(\ref{lek2}) and (\ref{20}) that%
\begin{align*}
&  \binom{p}{l+1}\left(  -\frac{b}{c}\right)  ^{l}b\int\limits_{0}%
^{k}x\overline{B}_{l+1,\overline{\chi_{1}}}\left(  cx\right)  \overline
{B}_{p-(l+1),\chi_{2}}\left(  bx\right)  dx\\
&  \quad=\chi_{1}\left(  -1\right)  \frac{k}{2}\left(  \frac{k}{c}\right)
^{p-1}\sum\limits_{h=1}^{k-1}\sum\limits_{j=1}^{k-1}\chi_{1}(h)\overline{\chi
}_{2}(j)\overline{B}_{p}\left(  \frac{cj}{k}+\frac{bh}{k}\right)
\end{align*}
for $\gcd\left(  b,c\right)  =1.$ In particular%
\[
\int\limits_{0}^{k}x\overline{B}_{l+1,\overline{\chi_{1}}}\left(  x\right)
\overline{B}_{p-(l+1),\chi_{2}}\left(  kx\right)  dx=0.
\]

\section{Integral of products of Bernoulli polynomials}

\subsection{Bernoulli polynomials}

In \cite{ad}, Agoh and Dilcher derived an explicit formula for the integral of
product of three Bernoulli polynomials by considering the interval of
integration $\left[  0,x\right]  ,$ rather than $\left[  0,1\right]  $ as in
\cite{c,em,mi,mo,n,w}, which are thus special cases of the following.

\begin{proposition}
(\cite[Proposition 3]{ad})\label{prop1} For all $l,m,n\geq0$ we have%
\begin{align*}
&  \frac{1}{l!m!n!}\int\limits_{0}^{x}B_{l}\left(  z\right)  B_{m}\left(
z\right)  B_{n}\left(  z\right)  dz\\
&  \quad=\sum\limits_{a=0}^{l+m}\left(  -1\right)  ^{a}\sum\limits_{j=0}%
^{a}\binom{a}{j}\frac{B_{l-a+j}\left(  x\right)  B_{m-j}\left(  x\right)
B_{n+a+1}\left(  x\right)  -B_{l-a+j}B_{m-j}B_{n+a+1}}{\left(  l-a+j\right)
!\left(  m-j\right)  !\left(  n+a+1\right)  !}.
\end{align*}

\end{proposition}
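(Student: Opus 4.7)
The plan is to prove the identity by iterated integration by parts, treating $B_{n}(z)\,dz$ as the $dv$ factor so that the two remaining factors $B_{l}(z)$, $B_{m}(z)$ are progressively differentiated while $B_{n}$ is progressively antidifferentiated via (\ref{1}). Write $f(z):=B_{l}(z)B_{m}(z)$; one application gives
\begin{equation*}
\int_{0}^{x}f(z)B_{n}(z)\,dz=\frac{1}{n+1}\Bigl[f(z)B_{n+1}(z)\Bigr]_{0}^{x}-\frac{1}{n+1}\int_{0}^{x}f'(z)B_{n+1}(z)\,dz,
\end{equation*}
and iterating this $M+1$ times produces
\begin{equation*}
\int_{0}^{x}f(z)B_{n}(z)\,dz=\sum_{a=0}^{M}(-1)^{a}\frac{n!}{(n+a+1)!}\Bigl[f^{(a)}(z)B_{n+a+1}(z)\Bigr]_{0}^{x}+R_{M},
\end{equation*}
with remainder $R_{M}=(-1)^{M+1}\frac{n!}{(n+M+1)!}\int_{0}^{x}f^{(M+1)}(z)B_{n+M+1}(z)\,dz$. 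Since $\deg f=l+m$, the choice $M=l+m$ forces $f^{(M+1)}\equiv 0$, so $R_{M}=0$ and the recursion terminates after exactly $l+m+1$ steps.

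Next I would expand $f^{(a)}$ via the Leibniz rule. From (\ref{1}) we have $B_{l}^{(a-j)}(z)=\tfrac{l!}{(l-a+j)!}B_{l-a+j}(z)$ and $B_{m}^{(j)}(z)=\tfrac{m!}{(m-j)!}B_{m-j}(z)$, under the convention $B_{-k}(z):=0$ and $1/(-k)!:=0$ for $k>0$, so that out-of-range terms in the Leibniz sum vanish automatically. Hence
\begin{equation*}
f^{(a)}(z)=\sum_{j=0}^{a}\binom{a}{j}\frac{l!\,m!}{(l-a+j)!(m-j)!}B_{l-a+j}(z)B_{m-j}(z).
\end{equation*}
Substituting this into the preceding display and multiplying throughout by $1/(l!\,m!\,n!)$ collapses the combined factorial prefactor $\frac{1}{l!\,m!\,n!}\cdot\frac{n!}{(n+a+1)!}\cdot\frac{l!\,m!}{(l-a+j)!(m-j)!}$ exactly to $\frac{1}{(l-a+j)!(m-j)!(n+a+1)!}$. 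Splitting the bracket $[\,\cdot\,]_{0}^{x}$ into its two endpoints yields the numerator $B_{l-a+j}(x)B_{m-j}(x)B_{n+a+1}(x)-B_{l-a+j}B_{m-j}B_{n+a+1}$ that appears in the claim.

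The main obstacle is purely bookkeeping: tracking the descending factorial $\frac{n!}{(n+a+1)!}$ produced by the antiderivative tower, its interaction with the Leibniz coefficients, and the sign $(-1)^{a}$ through all iterations. A small sanity check is that the nonzero range $\max(0,a-l)\leq j\leq\min(a,m)$ of the inner sum already forces $a\leq l+m$, confirming that the outer upper limit in the statement is the natural one and that truncating the integration-by-parts chain at step $l+m$ loses no terms. Beyond this, no tool deeper than (\ref{1}), the Leibniz rule, and degree-counting is required.
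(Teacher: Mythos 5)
Your proof is correct and follows essentially the same route as the paper: the paper establishes the general iterated integration-by-parts identity (\ref{th-i}) (with vanishing remainder once the differentiated factor is a polynomial of degree $\mu$) and combines it with the Leibniz rule (\ref{30}), exactly as you do with $f=B_lB_m$, $P_n=B_n$, and $\mu=l+m$. The factorial bookkeeping and the convention $1/(-k)!=0$ killing out-of-range Leibniz terms are handled correctly, so nothing is missing.
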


Recently, this has been extended by Hu et al \cite{hkk} to integral of product
of $r$ Bernoulli polynomials. Let $r$ be any positive integer. The multinomial
coefficients $\binom{\mu}{n_{1},...,n_{r}}$ are defined by%
\[
\binom{\mu}{n_{1},...,n_{r}}=\frac{\mu!}{n_{1}!\cdots n_{r}!},
\]
where $n_{1}+\cdots+n_{r}=\mu$ and $n_{1},...,n_{r}\geq0.$ Let
\begin{align}
I_{n_{1},...,n_{r}}(x)  &  =\frac{1}{n_{1}!\cdots n_{r}!}\int\limits_{0}%
^{x}B_{n_{1}}\left(  z\right)  \cdots B_{n_{r}}\left(  z\right)
dz,\label{19}\\
C_{n_{1},...,n_{r}}(x)  &  =\frac{1}{n_{1}!\cdots n_{r}!}\left(  B_{n_{1}%
}\left(  x\right)  \cdots B_{n_{r}}\left(  x\right)  -B_{n_{1}}\cdots
B_{n_{r}}\right)  .\nonumber
\end{align}
Hu et al \cite{hkk} derive the following theorem.

\begin{theorem}
\label{th-i1}(\cite[Theorem 1.5]{hkk}) For any $n_{1},\ldots,n_{r}\geq0$, we
have%
\[
I_{n_{1},...,n_{r}}(x)=\sum\limits_{a=0}^{n_{1}+\cdots+n_{r-1}}\left(
-1\right)  ^{a}\sum\limits_{j_{1}+\cdots+j_{r-1}=a}\binom{a}{j_{1}%
,...,j_{r-1}}C_{n_{1}-j_{1},\ldots,n_{r-1}-j_{r-1},n_{r}+a+1}(x).
\]

\end{theorem}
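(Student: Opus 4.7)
The formula of Theorem \ref{th-i1} has exactly the shape one obtains from iterating a single integration by parts, so that is how I would proceed.

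\textbf{Step 1 (one-step recursion).} Setting $u(z)=\prod_{i=1}^{r-1}B_{n_i}(z)$ and $dv=B_{n_r}(z)\,dz$, the differentiation formula (\ref{1}) gives $v(z)=B_{n_r+1}(z)/(n_r+1)$ and
\[
u'(z)=\sum_{k=1}^{r-1}n_k B_{n_k-1}(z)\prod_{\substack{1\leq i\leq r-1\\ i\neq k}}B_{n_i}(z).
\]
Integrating by parts on $[0,x]$, dividing by $n_1!\cdots n_r!$, and using $n_k/n_k!=1/(n_k-1)!$, I would obtain the one-step recursion
\[
I_{n_1,\ldots,n_r}(x)=C_{n_1,\ldots,n_{r-1},n_r+1}(x)-\sum_{k=1}^{r-1}I_{n_1,\ldots,n_k-1,\ldots,n_{r-1},n_r+1}(x),
\]
adopting the convention that any $I$- or $C$-symbol carrying a negative subscript equals $0$; this is consistent with the derivation because the prefactor $n_k$ in $u'$ already kills the $k$-th summand when $n_k=0$.

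\textbf{Step 2 (iteration by induction on $N$).} Writing $\vec n=(n_1,\ldots,n_{r-1})$, $m=n_r$ and $|\vec j|=j_1+\cdots+j_{r-1}$, I would establish by induction on $N\geq 1$ that
\[
I_{\vec n,m}(x)=\sum_{b=0}^{N-1}(-1)^b\sum_{|\vec j|=b}\binom{b}{j_1,\ldots,j_{r-1}}C_{\vec n-\vec j,\,m+b+1}(x)+(-1)^N\sum_{|\vec j|=N}\binom{N}{j_1,\ldots,j_{r-1}}I_{\vec n-\vec j,\,m+N}(x).
\]
The base case $N=1$ is the recursion of Step 1. For the inductive step I would substitute the recursion into each $I_{\vec n-\vec j,\,m+N}$ and collapse the resulting double sum by means of the multinomial Pascal identity
\[
\sum_{k=1}^{r-1}\binom{N}{j_1,\ldots,j_k-1,\ldots,j_{r-1}}=\binom{N+1}{j_1,\ldots,j_{r-1}}\qquad (|\vec j|=N+1),
\]
which follows from $\binom{N}{j_1,\ldots,j_k-1,\ldots,j_{r-1}}=j_k\cdot N!/(j_1!\cdots j_{r-1}!)$ together with $\sum_k j_k=N+1$. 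Keeping this combinatorial bookkeeping clean is the only real obstacle; the rest is mechanical.

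\textbf{Step 3 (termination).} Taking $N=n_1+\cdots+n_{r-1}+1$, every composition $\vec j$ of $N$ into $r-1$ nonnegative parts must satisfy $j_k>n_k$ for at least one $k$, so $I_{\vec n-\vec j,\,m+N}=0$ by the convention of Step 1 and the remainder vanishes. The same convention also truncates the $C$-sum at $b\leq n_1+\cdots+n_{r-1}$, so the identity collapses to Theorem \ref{th-i1}.
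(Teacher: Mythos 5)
Your proof is correct and is essentially the paper's own argument: the paper obtains Theorem \ref{th-i1} by applying its $\mu$-fold integration-by-parts formula (\ref{th-i}) with $f=B_{n_1}\cdots B_{n_{r-1}}$, $P_n=B_{n_r}$ and $\mu=n_1+\cdots+n_{r-1}$ (so that $f^{(\mu+1)}=0$ kills the remainder), and then expands $f^{(a)}$ via the generalized Leibniz rule (\ref{30}) to produce the multinomial coefficients. You unfold exactly the same computation one integration by parts at a time, recovering the Leibniz coefficients by induction through the multinomial Pascal identity; this is a different packaging of the same bookkeeping, not a different method.
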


More recently, this result has been extended to the Appell polynomials (such
polynomials satisfying $\dfrac{d}{dz}A_{n}(z)=nA_{n-1}(z),$ $n=0,1,2,...$) by
Liu et al \cite[Theorem 1.1]{lpz}.

We can infer from (\ref{30}) and (\ref{th-i}) (see below) that these results
are valid in a more general form and can be proved more easily. We take
advantage of the following property of derivative
\begin{equation}
\left(  f_{1}\left(  z\right)  \cdots f_{m}\left(  z\right)  \right)
^{\left(  a\right)  }=\sum\limits_{j_{1}+\cdots+j_{m}=a}\binom{a}%
{j_{1},...,j_{m}}f_{1}{}^{\left(  j_{1}\right)  }\left(  z\right)  \cdots
f_{m}^{\left(  j_{m}\right)  }\left(  z\right)  , \label{30}%
\end{equation}
which can be easily seen by induction on $m.$

Let $f(z)$ be differentiable of order $\mu+1,$ and $P_{n}\left(  z\right)  $
be a polynomial (or function) such that $dP_{n}(z)/dz=nP_{n-1}(z).$
Integration by parts gives%
\[
\frac{1}{n!}\int\limits_{0}^{x}f(z)P_{n}\left(  z\right)  dz=\frac{1}{\left(
n+1\right)  !}\left[  f(z)P_{n+1}\left(  z\right)  \right]  _{0}^{x}-\frac
{1}{\left(  n+1\right)  !}\int\limits_{0}^{x}f\ ^{\prime}(z)P_{n+1}\left(
z\right)  dz.
\]
Using $\mu$ additional integrations by parts it is seen that
\begin{align}
\frac{1}{n!}\int\limits_{0}^{x}f(z)P_{n}\left(  z\right)  dz  &
=\sum\limits_{a=0}^{\mu}\frac{\left(  -1\right)  ^{a}}{\left(  n+a+1\right)
!}\left[  f^{\left(  a\right)  }\left(  z\right)  P_{n+a+1}\left(  z\right)
^{\ }\right]  _{0}^{x}\nonumber\\
&  +\frac{\left(  -1\right)  ^{\mu+1}}{\left(  n+\mu+1\right)  !}%
\int\limits_{0}^{x}f\ ^{\left(  \mu+1\right)  }(z)P_{n+\mu+1}\left(  z\right)
dz. \label{th-i}%
\end{align}
In particular if $f^{\left(  \mu+1\right)  }(z)=0,$ then%
\[
\frac{1}{n!}\int\limits_{0}^{x}f(z)P_{n}\left(  z\right)  dz=\sum
\limits_{a=0}^{\mu}\frac{\left(  -1\right)  ^{a}}{\left(  n+a+1\right)
!}\left(  f^{\left(  a\right)  }\left(  x\right)  P_{n+a+1}\left(  x\right)
-f^{\left(  a\right)  }\left(  0\right)  P_{n+a+1}\left(  0\right)  \frac{{}%
}{{}}\right)  .
\]

\begin{remark}
\label{rem1}By considering
\[
f\left(  z\right)  =B_{n_{1}}\left(  z\right)  \cdots B_{n_{r-1}}\left(
z\right)  \text{ and }P_{n}\left(  z\right)  =B_{n_{r}}\left(  z\right)
\]
and for an Appell polynomial $A_{n}\left(  z\right)  $%
\[
f\left(  z\right)  =A_{n_{1}}\left(  z\right)  \cdots A_{n_{r-1}}\left(
z\right)  \text{ and }P_{n}\left(  z\right)  =A_{n_{r}}\left(  z\right)
\]
in (\ref{th-i}), and using (\ref{30}) we have Hu et al's \cite{hkk} and Liu et
al's \cite{lpz} results, respectively.
\end{remark}

Let $b_{l}$ $\left(  b_{l}\not =0\right)  $ and $y_{l}$ $\left(  1\leq l\leq
r\right)  $ be arbitrary real numbers and let
\begin{align*}
\widehat{I}_{n_{1},...,n_{r}}(x;b;y)  &  =\widehat{I}_{n_{1},...,n_{r}%
}(x;b_{1},...,b_{r};y_{1},...,y_{r})\\
&  =\frac{1}{n_{1}!\cdots n_{r}!}\int\limits_{0}^{x}B_{n_{1}}\left(
b_{1}z+y_{1}\right)  \cdots B_{n_{r}}\left(  b_{r}z+y_{r}\right)  dz,\\
\widehat{C}_{n_{1},...,n_{r}}(x;b;y)  &  =\widehat{C}_{n_{1},...,n_{r}}\left(
x;b_{1},...,b_{r};y_{1},...,y_{r}\right) \\
&  =\frac{1}{n_{1}!\cdots n_{r}!}\left(  \prod\limits_{l=1}^{r}B_{n_{l}%
}\left(  b_{l}x+y_{l}\right)  -\prod\limits_{l=1}^{r}B_{n_{l}}\left(
y_{l}\right)  \right)  .
\end{align*}
We relate $\widehat{I}_{n_{1},...,n_{r}}(x;b;y)$ and $\widehat{C}%
_{n_{1},...,n_{r}}(x;b;y)$ in the following proposition.

\begin{proposition}
\label{th-i2}For any $n_{1},\ldots,n_{r}\geq0$, we have%
\begin{align}
\widehat{I}_{n_{1},...,n_{r}}(x;b;y)  &  =\sum\limits_{a=0}^{n_{1}%
+\cdots+n_{r-1}}\left(  -1\right)  ^{a}\sum\limits_{j_{1}+\cdots+j_{r-1}%
=a}\binom{a}{j_{1},...,j_{r-1}}\label{32}\\
&  \times b_{1}^{j_{1}}\cdots b_{r-1}^{j_{r-1}}b_{r}^{-a-1}\widehat{C}%
_{n_{1}-j_{1},\ldots,n_{r-1}-j_{r-1},n_{r}+a+1}(x;b;y).\nonumber
\end{align}

\end{proposition}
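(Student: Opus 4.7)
The plan is to apply the integration-by-parts identity (\ref{th-i}) together with the Leibniz-type formula (\ref{30}), after first normalizing so that the last factor has the required derivative property. Since $\tfrac{d}{dz}B_n(b_r z+y_r)=b_r n\,B_{n-1}(b_r z+y_r)$, set
\[
P_n(z)=b_r^{-n}B_n(b_r z+y_r),
\]
so that $P_n'(z)=nP_{n-1}(z)$ as required by (\ref{th-i}). Accordingly write $B_{n_r}(b_r z+y_r)=b_r^{n_r}P_{n_r}(z)$. Take
\[
f(z)=B_{n_1}(b_1 z+y_1)\cdots B_{n_{r-1}}(b_{r-1}z+y_{r-1}),
\]
which is a polynomial of degree $n_1+\cdots+n_{r-1}=:\mu$, so $f^{(\mu+1)}\equiv0$.

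Next, I would apply formula (\ref{th-i}) to the integral $\tfrac{1}{n_r!}\int_0^x f(z) P_{n_r}(z)\,dz$ with this value of $\mu$, so that the remainder integral vanishes. Multiplying by $b_r^{n_r}$ and using $b_r^{n_r}P_{n_r+a+1}(z)=b_r^{-(a+1)}B_{n_r+a+1}(b_r z+y_r)$, this yields
\[
\frac{1}{n_r!}\int_0^x f(z)B_{n_r}(b_r z+y_r)\,dz=\sum_{a=0}^{\mu}\frac{(-1)^a\,b_r^{-a-1}}{(n_r+a+1)!}\Bigl[f^{(a)}(z)B_{n_r+a+1}(b_r z+y_r)\Bigr]_0^{x}.
\]

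Then I would compute $f^{(a)}(z)$ by the multinomial Leibniz rule (\ref{30}): since $\tfrac{d^{j_l}}{dz^{j_l}}B_{n_l}(b_l z+y_l)=\tfrac{n_l!}{(n_l-j_l)!}b_l^{j_l}B_{n_l-j_l}(b_l z+y_l)$, one obtains
\[
f^{(a)}(z)=\sum_{j_1+\cdots+j_{r-1}=a}\binom{a}{j_1,\ldots,j_{r-1}}\prod_{l=1}^{r-1}\frac{n_l!}{(n_l-j_l)!}\,b_l^{j_l}B_{n_l-j_l}(b_l z+y_l).
\]
Substituting this into the previous display, dividing through by $n_1!\cdots n_{r-1}!$, and recognizing the bracketed difference at $z=x$ and $z=0$ as exactly $(n_1-j_1)!\cdots(n_{r-1}-j_{r-1})!(n_r+a+1)!\cdot\widehat{C}_{n_1-j_1,\ldots,n_{r-1}-j_{r-1},n_r+a+1}(x;b;y)$, the factorial prefactors telescope away and formula (\ref{32}) emerges.

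The only real obstacle is bookkeeping: correctly tracking the scaling factors $b_r^{n_r}$ and $b_r^{-(n_r+a+1)}$ that arise from the renormalization $P_n=b_r^{-n}B_n(b_r\cdot+y_r)$, and ensuring that the multinomial factor $\binom{a}{j_1,\ldots,j_{r-1}}$ with the factorial $(n_r+a+1)!$ combine cleanly with the factorials in the denominator of $\widehat{I}_{n_1,\ldots,n_r}$ and $\widehat{C}_{n_1-j_1,\ldots,n_{r-1}-j_{r-1},n_r+a+1}$. Apart from this routine accounting, the argument is a direct application of (\ref{th-i}) and (\ref{30}), exactly as anticipated in Remark \ref{rem1}.
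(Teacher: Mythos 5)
Your proof is correct and follows essentially the same route as the paper: apply the integration-by-parts identity (\ref{th-i}) with $f$ the product of the first $r-1$ factors and the last factor playing the role of $P_n$, take $\mu=n_1+\cdots+n_{r-1}$ so the remainder vanishes, and expand $f^{(a)}$ via (\ref{30}). Your explicit normalization $P_n(z)=b_r^{-n}B_n(b_rz+y_r)$ is in fact a welcome clarification of a step the paper leaves implicit, and it correctly produces the factor $b_r^{-a-1}$ in (\ref{32}).
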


\begin{proof}
Setting
\[
f\left(  z\right)  =B_{n_{1}}\left(  b_{1}z+y_{1}\right)  \cdots B_{n_{r-1}%
}\left(  b_{r-1}z+y_{r-1}\right)  \text{ and }P_{n}\left(  z\right)
=B_{n_{r}}\left(  b_{r}z+y_{r}\right)
\]
in (\ref{th-i}) for $\mu=n_{1}+\cdots+n_{r-1},$ then using (\ref{1}) and
(\ref{30}) we have the desired result.
\end{proof}

Similar to \cite[Corollary 2]{ad} and \cite[Corollary 1.7]{hkk}, it is seen
from the definition of the integral $\widehat{I}_{n_{1},...,n_{r}}(x;b;y)$
that the right-hand side of (\ref{32}) is invariant under all permutations.

\begin{corollary}
\label{cor1}Let $T_{n_{1},...,n_{r}}(x;b;y)=T_{n_{1},...,n_{r}}(x;b_{1}%
,...,b_{r};y_{1},...,y_{r})$ be the right-hand side of (\ref{32}), and let
$\sigma\in S_{r}$, where $S_{r}$ is the symmetric group of degree $r$. Then
for all $n_{1},\ldots,n_{r}\geq0,$%
\[
T_{n_{1},...,n_{r}}(x;b;y)=T_{\sigma\left(  n_{1}\right)  ,...,\sigma\left(
n_{r}\right)  }(x;b_{\sigma};y_{\sigma}),
\]
where $b_{\sigma}=\left(  b_{\sigma\left(  n_{1}\right)  },...,b_{\sigma
\left(  n_{r}\right)  }\right)  $ and $y_{\sigma}=\left(  y_{\sigma\left(
n_{1}\right)  },...,y_{\sigma\left(  n_{r}\right)  }\right)  .$
\end{corollary}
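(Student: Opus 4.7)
The plan is to deduce the permutation-invariance of $T_{n_{1},\ldots,n_{r}}(x;b;y)$ directly from the equality $T_{n_{1},\ldots,n_{r}}(x;b;y)=\widehat{I}_{n_{1},\ldots,n_{r}}(x;b;y)$ established in Proposition \ref{th-i2}. The crucial observation is that the integral
\[
\widehat{I}_{n_{1},\ldots,n_{r}}(x;b;y)=\frac{1}{n_{1}!\cdots n_{r}!}\int_{0}^{x}B_{n_{1}}(b_{1}z+y_{1})\cdots B_{n_{r}}(b_{r}z+y_{r})\,dz
\]
is manifestly invariant under any simultaneous permutation of the triples $(n_{l},b_{l},y_{l})$, since the integrand is a product of real-valued functions and multiplication in $\mathbb{R}$ is commutative. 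Thus the asymmetric appearance of $n_{r}$, $b_{r}$ in the definition of $T$ (obtained by choosing the last factor as the polynomial $P_{n}$ in (\ref{th-i})) is purely an artefact of the integration-by-parts scheme in the proof of Proposition \ref{th-i2}, not a genuine feature of the quantity.

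Concretely, I would proceed in three short steps. First, fix an arbitrary $\sigma\in S_{r}$ and apply Proposition \ref{th-i2} with the input data $(n_{\sigma(1)},\ldots,n_{\sigma(r)})$, $b_{\sigma}$, $y_{\sigma}$ to obtain
\[
T_{\sigma(n_{1}),\ldots,\sigma(n_{r})}(x;b_{\sigma};y_{\sigma})=\widehat{I}_{\sigma(n_{1}),\ldots,\sigma(n_{r})}(x;b_{\sigma};y_{\sigma}).
\]
Second, use commutativity of multiplication inside the integral to identify
\[
\widehat{I}_{\sigma(n_{1}),\ldots,\sigma(n_{r})}(x;b_{\sigma};y_{\sigma})=\widehat{I}_{n_{1},\ldots,n_{r}}(x;b;y),
\]
noting that the factorials $n_{\sigma(l)}!$ reassemble to $n_{1}!\cdots n_{r}!$. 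Third, apply Proposition \ref{th-i2} once more in the original ordering to convert $\widehat{I}_{n_{1},\ldots,n_{r}}(x;b;y)$ back to $T_{n_{1},\ldots,n_{r}}(x;b;y)$, chaining the three equalities to conclude.

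There is no real obstacle in this argument; the only subtlety is notational, namely interpreting $b_{\sigma}$ and $y_{\sigma}$ consistently and verifying that the factorial normalisations match after permutation. The corollary is essentially a consistency check: it records that although the combinatorial identity (\ref{32}) singles out the $r$-th index by integrating the last factor, the resulting expression must, a posteriori, be symmetric in its arguments, and this symmetry is far from obvious from the right-hand side of (\ref{32}) alone.
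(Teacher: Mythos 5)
Your argument is correct and is exactly the paper's own justification: the authors note that the symmetry of $T_{n_{1},\ldots,n_{r}}(x;b;y)$ is "seen from the definition of the integral $\widehat{I}_{n_{1},\ldots,n_{r}}(x;b;y)$," i.e., they too deduce invariance from Proposition \ref{th-i2} together with the manifest symmetry of the integrand under simultaneous permutation of the triples $(n_{l},b_{l},y_{l})$. Your three-step chaining merely makes this explicit.
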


\begin{proof}
[\textbf{Proof of Corollary }\ref{cor2}]From Corollary \ref{cor1} and
Proposition \ref{th-i2} for $r=2,$ we have%
\begin{align}
&  \sum\limits_{a=0}^{n}\left(  -1\right)  ^{a}\binom{m+n+1}{n-a}b_{1}%
^{a}b_{2}^{-a-1}\left(  B_{n-a}\left(  b_{1}x+y_{1}\right)  B_{m+a+1}\left(
b_{2}x+y_{2}\right)  -B_{n-a}\left(  y_{1}\right)  B_{m+a+1}\left(
y_{2}\right)  \right) \nonumber\\
&  =\sum\limits_{a=0}^{m}\left(  -1\right)  ^{a}\binom{m+n+1}{m-a}b_{2}%
^{a}b_{1}^{-a-1}\left(  B_{m-a}\left(  b_{2}x+y_{2}\right)  B_{n+a+1}\left(
b_{1}x+y_{1}\right)  -B_{m-a}\left(  y_{2}\right)  B_{n+a+1}\left(
y_{1}\right)  \right)  . \label{25}%
\end{align}
Let
\begin{align*}
T  &  :=\sum\limits_{a=0}^{n}\left(  -1\right)  ^{a}\binom{m+n+1}{n-a}%
b_{1}^{a}b_{2}^{-a-1}B_{n-a}\left(  y_{1}\right)  B_{m+a+1}\left(
y_{2}\right) \\
&  \quad-\sum\limits_{a=0}^{m}\left(  -1\right)  ^{a}\binom{m+n+1}{m-a}%
b_{2}^{a}b_{1}^{-a-1}B_{m-a}\left(  y_{2}\right)  B_{n+a+1}\left(
y_{1}\right)  .
\end{align*}
This may be written as%
\begin{align*}
T  &  =\sum\limits_{a=0}^{n}\left(  -1\right)  ^{n-a}\binom{m+n+1}{a}%
b_{1}^{n-a}b_{2}^{a-n-1}B_{a}\left(  y_{1}\right)  B_{m+n+1-a}\left(
y_{2}\right) \\
&  \quad-\sum\limits_{a=0}^{m}\left(  -1\right)  ^{m-a}\binom{m+n+1}{a}%
b_{2}^{m-a}b_{1}^{a-m-1}B_{a}\left(  y_{2}\right)  B_{m+n+1-a}\left(
y_{1}\right)  .
\end{align*}
Without loss of generality we may assume that $n\geq m$; in this case we
divide the first sum into two parts, from $0$ to $m$ and $m+1$ to $n$, and the
results is%
\begin{align*}
&  \sum\limits_{a=0}^{m}\left(  -1\right)  ^{n-a}\binom{m+n+1}{a}b_{1}%
^{n-a}b_{2}^{a-n-1}B_{a}\left(  y_{1}\right)  B_{m+n+1-a}\left(  y_{2}\right)
\\
&  \quad=\sum\limits_{a=n+1}^{m+n+1}\left(  -1\right)  ^{m+1-a}\binom
{m+n+1}{a}b_{1}^{a-m-1}b_{2}^{m-a}B_{m+n+1-a}\left(  y_{1}\right)
B_{a}\left(  y_{2}\right)
\end{align*}
and
\begin{align*}
&  \sum\limits_{a=m+1}^{n}\left(  -1\right)  ^{n-a}\binom{m+n+1}{a}b_{1}%
^{n-a}b_{2}^{a-n-1}B_{a}\left(  y_{1}\right)  B_{m+n+1-a}\left(  y_{2}\right)
\\
&  \quad=\sum\limits_{a=m+1}^{n}\left(  -1\right)  ^{m+1-a}\binom{m+n+1}%
{a}b_{1}^{a-m-1}b_{2}^{m-a}B_{m+n+1-a}\left(  y_{1}\right)  B_{a}\left(
y_{2}\right)
\end{align*}
(for $m=n$ the above sum vanishes). Thus,
\begin{equation}
T=\frac{1}{b_{1}^{m+1}b_{2}^{n+1}}\sum\limits_{a=0}^{m+n+1}\left(  -1\right)
^{m+1-a}\binom{m+n+1}{a}b_{1}^{a}b_{2}^{m+n+1-a}B_{m+n+1-a}\left(
y_{1}\right)  B_{a}\left(  y_{2}\right)  . \label{29}%
\end{equation}
So, combining (\ref{25}) and (\ref{29}), the desired result follows.
\end{proof}

Observe that starting from the left-hand side of (\ref{24}) and proceeding as
in the proof of (\ref{29}), the right-hand side of (\ref{24}) turns into%
\[
\frac{1}{b_{1}^{m+1}b_{2}^{n+1}}\sum\limits_{a=0}^{m+n+1}\left(  -1\right)
^{m+1-a}\binom{m+n+1}{a}b_{1}^{a}b_{2}^{m+n+1-a}B_{m+n+1-a}\left(
b_{1}x+y_{1}\right)  B_{a}\left(  b_{2}x+y_{2}\right)  ,
\]
which implies that
\begin{align*}
&  \sum\limits_{a=0}^{m+n+1}\left(  -1\right)  ^{a}\binom{m+n+1}{a}b_{1}%
^{a}b_{2}^{m+n+1-a}B_{m+n+1-a}\left(  b_{1}x+y_{1}\right)  B_{a}\left(
b_{2}x+y_{2}\right) \\
&  \quad=\sum\limits_{a=0}^{m+n+1}\left(  -1\right)  ^{a}\binom{m+n+1}{a}%
b_{1}^{a}b_{2}^{m+n+1-a}B_{m+n+1-a}\left(  y_{1}\right)  B_{a}\left(
y_{2}\right)
\end{align*}
holds for all $x.$ Note that obtaining the relation above is not so clear
without using the integrals for the case $b_{1}\not =\pm b_{2}.$

Now we set $b_{1}=b_{2}=1$ in (\ref{24}). Then, by the fact $B_{m}\left(
1-x\right)  =\left(  -1\right)  ^{m}B_{m}\left(  x\right)  ,$
\[
T=\left(  -1\right)  ^{n}\sum\limits_{a=0}^{m+n+1}\binom{m+n+1}{a}%
B_{m+n+1-a}\left(  1-y_{1}\right)  B_{a}\left(  y_{2}\right)  .
\]
We take $x=1-y_{1,}$ $y=y_{2}$ and $p=m+n+1$ in the well-known identity%
\begin{equation}
\sum\limits_{a=0}^{p}\binom{p}{a}B_{p-a}\left(  x\right)  B_{a}\left(
y\right)  =p\left(  x+y-1\right)  B_{p-1}\left(  x+y\right)  -\left(
p-1\right)  B_{p}\left(  x+y\right)  \label{23}%
\end{equation}
and we find that
\begin{align}
&  \sum\limits_{a=0}^{n}\left(  -1\right)  ^{a}\binom{m+n+1}{n-a}%
B_{n-a}\left(  x+y_{1}\right)  B_{m+a+1}\left(  x+y_{2}\right) \nonumber\\
&  \quad-\sum\limits_{a=0}^{m}\left(  -1\right)  ^{a}\binom{m+n+1}{m-a}%
B_{m-a}\left(  x+y_{2}\right)  B_{n+a+1}\left(  x+y_{1}\right) \nonumber\\
&  =\left(  -1\right)  ^{m}\left(  m+n+1\right)  \left(  y_{2}-y_{1}\right)
B_{m+n}\left(  y_{1}-y_{2}\right)  +\left(  -1\right)  ^{m}\left(  m+n\right)
B_{m+n+1}\left(  y_{1}-y_{2}\right)  . \label{28}%
\end{align}
Note that (\ref{28}) reduces to \cite[Propositon 2]{ad} for $y_{1}=y_{2}$. The
case $b_{1}=-b_{2}=-1$ in (\ref{24}) is equivalent to (\ref{28}) which can be
seen by setting $1-y_{1}$ instead of $y_{1}$ and using $B_{n}\left(
1-y\right)  =\left(  -1\right)  ^{n}B_{n}\left(  y\right)  $.

We mention also that the right-hand side of (\ref{32}) becomes simple for real
numbers $y_{l},$ $b_{l}=\left(  1-2y_{l}\right)  /q,$ $\left(  y_{l}%
\not =1/2\right)  ,$ $1\leq l\leq r,$ and $x=q\not =0.$ Then%
\begin{align*}
&  \widehat{I}_{n_{1},...,n_{r}}\left(  q;\frac{1-2y_{1}}{q},\ldots
,\frac{1-2y_{r}}{q};y_{1},...,y_{r}\right) \\
&  =\sum\limits_{a=0}^{n_{1}+\cdots+n_{r-1}}\left(  -1\right)  ^{a}%
\sum\limits_{j_{1}+\cdots+j_{r-1}=a}\binom{a}{j_{1},...,j_{r-1}}b_{1}^{j_{1}%
}\cdots b_{r-1}^{j_{r-1}}b_{r}^{-a-1}\\
&  \quad\times\frac{\left(  \left(  -1\right)  ^{n_{1}+\cdots+n_{r}%
+1}-1\right)  }{\left(  n_{1}-j_{1}\right)  !\cdots\left(  n_{r}+a+1\right)
!}B_{n_{1}-j_{1}}\left(  y_{1}\right)  \cdots B_{n_{r-1}-j_{r-1}}\left(
y_{r-1}\right)  B_{n_{r}+a+1}\left(  y_{r}\right)
\end{align*}
since $B_{n_{l}-j_{l}}\left(  b_{l}q-y_{l}\right)  =B_{n_{l}-j_{l}}\left(
1-y_{l}\right)  =\left(  -1\right)  ^{n_{l}-j_{l}}B_{n_{l}-j_{l}}\left(
y_{l}\right)  $ and $j_{1}+\cdots+j_{r-1}=a.$ Therefore if $n_{1}+\cdots
+n_{r}+1$ is even, then
\begin{equation}
\widehat{I}_{n_{1},...,n_{r}}\left(  q;\frac{1-2y_{1}}{q},\ldots
,\frac{1-2y_{r}}{q};y_{1},...,y_{r}\right)  =0, \label{17a}%
\end{equation}
and if $n_{1}+\cdots+n_{r}+1$ is odd, then
\begin{align}
&  \widehat{I}_{n_{1},...,n_{r}}\left(  q;\frac{1-2y_{1}}{q},\ldots
,\frac{1-2y_{r}}{q};y_{1},...,y_{r}\right) \nonumber\\
&  =-2q\sum\limits_{a=0}^{n_{1}+\cdots+n_{r-1}}\left(  -1\right)  ^{a}%
\frac{\left(  1-2y_{r}\right)  ^{-a-1}}{\left(  n_{r}+a+1\right)  !}%
B_{n_{r}+a+1}\left(  y_{r}\right) \nonumber\\
&  \quad\times\sum\limits_{j_{1}+\cdots+j_{r-1}=a}\binom{a}{j_{1},...,j_{r-1}%
}\prod\limits_{l=1}^{r-1}\frac{\left(  1-2y_{l}\right)  ^{j_{l}}}{\left(
n_{l}-j_{l}\right)  !}B_{n_{l}-j_{l}}\left(  y_{l}\right)  . \label{17b}%
\end{align}
For example, we have
\begin{align*}
&  \int\limits_{0}^{1}B_{3}\left(  -z+1\right)  B_{4}\left(  3z-1\right)
B_{16}\left(  5z-2\right)  dz=0,\\
&  \frac{1}{3!4!15!}\int\limits_{0}^{1}B_{3}\left(  -z+1\right)  B_{4}\left(
3z-1\right)  B_{15}\left(  -3z+2\right)  dz\\
&  \quad=-2\sum\limits_{a=0}^{7}\frac{B_{16+a}\left(  2\right)  }{\left(
16+a\right)  !}\sum\limits_{i=0}^{a}\binom{a}{i}3^{-i-1}\frac{B_{3-i}%
B_{4-a+i}\left(  -1\right)  }{\left(  3-i\right)  !\left(  4-a+i\right)  !}.
\end{align*}

\subsection{Generalized Bernoulli polynomials}

The counterpart of Proposition \ref{th-i2} for generalized Bernoulli
polynomials $B_{n_{l},\chi_{l}}\left(  b_{l}z+y_{l}\right)  $ can be also
expressed for non-principal primitive characters $\chi_{l}$ of modulus
$k_{l},$ $1\leq l\leq r.$ But in this case, $n_{1},\ldots,n_{r}$ must be
$\geq1$ since the degree of $B_{n,\chi}\left(  z\right)  $\ is not greater
than $n-1.$ Moreover, the summation from $a=0$ to $n_{1}+\cdots+n_{r-1}$\ on
the right-hand side of (\ref{32}) can be replaced by the summation from $a=0$
to $n_{1}+\cdots+n_{r-1}-\left(  r-1\right)  $. Under this circumstances the
analogues of (\ref{17a}) and (\ref{17b}) are valid according to $\chi
_{1}\left(  -1\right)  \cdots\chi_{r}\left(  -1\right)  \left(  -1\right)
^{n_{1}+\cdots+n_{r}+1}=1$ or $-1,$ but in this case $b_{l}=-2y_{l}/q,$
$\left(  y_{l}\not =0\right)  ,$ $1\leq l\leq r.$ Furthermore, first writing
the analogue of (\ref{25}) then adopting the arguments in the proof of
(\ref{29}), it can be seen for all $m,n\geq1$ that
\begin{align}
&  \sum\limits_{a=0}^{n}\left(  -1\right)  ^{a}\binom{m+n+1}{n-a}b_{1}%
^{a}b_{2}^{-a-1}B_{n-a,\chi_{1}}\left(  b_{1}x+y_{1}\right)  B_{m+a+1,\chi
_{2}}\left(  b_{2}x+y_{2}\right) \nonumber\\
&  \quad-\sum\limits_{a=0}^{m}\left(  -1\right)  ^{a}\binom{m+n+1}{m-a}%
b_{2}^{a}b_{1}^{-a-1}B_{m-a,\chi_{2}}\left(  b_{2}x+y_{2}\right)
B_{n+a+1,\chi_{1}}\left(  b_{1}x+y_{1}\right) \nonumber\\
&  \ =\frac{\left(  -1\right)  ^{m+1}}{b_{1}^{m+1}b_{2}^{n+1}}\sum
\limits_{a=0}^{m+n+1}\left(  -1\right)  ^{a}\binom{m+n+1}{a}b_{1}^{a}%
b_{2}^{m+n+1-a}B_{m+n+1-a,\chi_{1}}\left(  y_{1}\right)  B_{a,\chi_{2}}\left(
y_{2}\right)  . \label{36}%
\end{align}

We notice that if $\left(  -1\right)  ^{m+n}\chi_{1}\left(  -1\right)
\chi_{2}\left(  -1\right)  =1$ and $y_{1}=y_{2}=0,$\textbf{ }then the
right-hand side of (\ref{36}) vanishes, since $B_{m+n+1-a,\chi_{1}}%
B_{a,\chi_{2}}=0$\ by (\ref{3}). In the case $\left(  -1\right)  ^{m+n}%
\chi_{1}\left(  -1\right)  \chi_{2}\left(  -1\right)  =-1$ and $y_{1}=y_{2}%
=0$\textbf{ }the sum on the right-hand side of (\ref{36}) is closely related
to the reciprocity formulas of character Dedekind sums. For instance, from
Corollary \ref{rp3} and (\ref{36}), we have\textbf{ }%
\begin{align*}
&  \left(  p+1\right)  \left(  bc^{p}\text{ }\widehat{S}_{p}\left(
b,c:\overline{\chi_{1}},\chi_{2}\right)  +\frac{{}}{{}}cb^{p}\text{
}\widehat{S}_{p}\left(  c,b:\overline{\chi_{2}},\chi_{1}\right)  \right) \\
&  \quad=\sum\limits_{a=0}^{n-1}\binom{m+n+1}{n-a}c^{m+a+1}b^{n-a}%
B_{n-a,\chi_{1}}\left(  -cx\right)  B_{m+a+1,\chi_{2}}\left(  bx\right) \\
&  \qquad+\sum\limits_{a=0}^{m-1}\binom{m+n+1}{m-a}b^{n+a+1}c^{m-a}%
B_{m-a,\chi_{2}}\left(  bx\right)  B_{n+a+1,\chi_{1}}\left(  -cx\right) \\
&  \quad=\sum\limits_{j=0}^{p+1}\binom{p+1}{j}c^{j}b^{p+1-j}B_{p+1-j,\chi_{1}%
}B_{j,\chi_{2}}%
\end{align*}
for $\left(  -1\right)  ^{p+1}\chi_{1}(-1)\chi_{2}(-1)=1$ and $p=m+n$ $\left(
m,n\geq1\right)  $.

\subsection{Laplace transform of Bernoulli function}

Let $Re\left(  s\right)  >0$ and $|s/t|<2\pi.$ Setting $f(u)=e^{-su}$ and
$P_{n}\left(  u\right)  =\overline{B}_{n}\left(  tu+y\right)  ,$ $n\geq1,$ in
(\ref{th-i}) gives%
\begin{align}
\frac{1}{n!}\int\limits_{0}^{x}e^{-su}\overline{B}_{n}\left(  tu+y\right)  du
&  =\sum\limits_{a=0}^{\mu}\frac{s^{a}t^{-a-1}}{\left(  n+a+1\right)
!}\left\{  e^{-sx}\overline{B}_{n+a+1}(tx+y)-\overline{B}_{n+a+1}(y)\right\}
\nonumber\\
&  +\left(  \frac{s}{t}\right)  ^{\mu+1}\frac{1}{\left(  n+\mu+1\right)
!}\int\limits_{0}^{x}e^{-su}\overline{B}_{n+\mu+1}\left(  tu+y\right)  du.
\label{37}%
\end{align}
Since the function $\overline{B}_{m}\left(  u\right)  =B_{m}\left(  u-\left[
u\right]  \right)  $ is bounded, the integrals in (\ref{37}) converge
absolutely and $e^{-sx}\overline{B}_{n+a+1}(tx+y)$ tends to $0$ as
$x\rightarrow\infty.$ Then, letting $x\rightarrow\infty,$ we have%
\begin{align}
\frac{1}{n!}\int\limits_{0}^{\infty}e^{-su}\overline{B}_{n}\left(
tu+y\right)  du  &  =-\frac{t^{n}}{s^{n+1}}\sum\limits_{a=0}^{\mu}%
\frac{\overline{B}_{n+a+1}(y)}{\left(  n+a+1\right)  !}\frac{s^{n+a+1}%
}{t^{n+a+1}}\nonumber\\
&  +\left(  \frac{s}{t}\right)  ^{\mu+1}\frac{1}{\left(  n+\mu+1\right)
!}\int\limits_{0}^{\infty}e^{-su}\overline{B}_{n+\mu+1}\left(  tu+y\right)
du. \label{39}%
\end{align}
From (\ref{0}) the sum in (\ref{39}) converges absolutely for $|s/t|<2\pi$ as
$\mu\rightarrow\infty.$ Also the sequence of the functions $g_{\mu}\left(
u\right)  =s^{\mu}\overline{B}_{\mu}\left(  tu+y\right)  /\mu!t^{\mu}$
converges uniformly to $0$ for $|s/t|<2\pi$. Thus, letting $\mu\rightarrow
\infty$ we find that
\begin{equation}
\frac{1}{n!}\int\limits_{0}^{\infty}e^{-su}\overline{B}_{n}\left(
tu+y\right)  du=-\frac{t^{n}}{s^{n+1}}\sum\limits_{a=n+1}^{\infty}%
\frac{\overline{B}_{a}(y)}{a!}\frac{s^{a}}{t^{a}}. \label{16a}%
\end{equation}
Since $\overline{B}_{a}(y)=B_{a}(\left\{  y\right\}  )$ for $a\geq2$ , where
$\left\{  y\right\}  =y-\left[  y\right]  ,$ we have
\[
\sum\limits_{a=n+1}^{\infty}\frac{\overline{B}_{a}(y)}{a!}u^{a}=-\sum
\limits_{a=0}^{n}\frac{B_{a}(\left\{  y\right\}  )}{a!}u^{a}+\frac
{ue^{\left\{  y\right\}  u}}{e^{u}-1}%
\]
by (\ref{0}). Therefore, we arrive at the Laplace transform of $\overline
{B}_{n}\left(  tu+y\right)  $%
\begin{equation}
\int\limits_{0}^{\infty}e^{-su}\overline{B}_{n}\left(  tu+y\right)
du=n!\frac{t^{n}}{s^{n+1}}\left(  \sum\limits_{a=0}^{n}\frac{B_{a}(\left\{
y\right\}  )}{a!}\frac{s^{a}}{t^{a}}-\frac{s}{t}\frac{e^{\left\{  y\right\}
s/t}}{e^{s/t}-1}\right)  \label{16}%
\end{equation}
for all $Re\left(  s\right)  >0$ and $n\geq1,$ by analytic continuation.
Differentiating $r$ times both sides of (\ref{16}) with respect to $s,$ then
using the well-known identity $\sum\nolimits_{r=0}^{m}\binom{m}{r}B_{m-r}%
x^{r}=B_{m}\left(  x\right)  $ we deduce that
\begin{align*}
&  \int\limits_{0}^{\infty}e^{-su}B_{m}\left(  u\right)  \overline{B}%
_{n}\left(  u\right)  du\\
&  \quad=\sum\limits_{r=0}^{m}\binom{m}{r}B_{m-r}\left(  \sum\limits_{a=0}%
^{n}\binom{n}{a}\frac{\left(  n+r-a\right)  !}{s^{n+1+r-a}}B_{a}-n!\left(
-1\right)  ^{r}\frac{d^{r}}{ds^{r}}\frac{s^{-n}}{e^{s}-1}\right)  .
\end{align*}

By the similar way, we have
\[
\frac{1}{n!}\int\limits_{0}^{\infty}e^{-su}\overline{B}_{n,\chi}\left(
tu\right)  du=\frac{1}{s}\sum\limits_{a=0}^{n}\frac{B_{a,\chi}}{a!}\left(
\frac{t}{s}\right)  ^{n-a}-\frac{t^{n-1}}{s^{n}}\sum\limits_{j=0}^{k-1}%
\frac{\overline{\chi}\left(  j\right)  e^{js/t}}{e^{ks/t}-1}%
\]
for $Re\left(  z\right)  >0,$ $n\geq1,$ and a non-principal primitive
character $\chi$ of modulus $k.$

\end{document}